\setlist[itemize]{leftmargin=*} 
\setlist[enumerate]{leftmargin=*}
\theoremstyle{plain}
\newtheorem{theorem}{Theorem}[section]
\newtheorem{proposition}[theorem]{Proposition}
\newtheorem{lemma}[theorem]{Lemma}
\newtheorem{observation}[theorem]{Observation}
\theoremstyle{definition}
\newtheorem{question}[theorem]{Question}
\newtheorem*{defn*}{Definition}
\newtheorem*{rem}{Remark}
\def\expandafter\normalsize\expandafter{%
    \normalsize
    \setlength\abovedisplayskip{4pt}
    \setlength\belowdisplayskip{4pt}
    \setlength\abovedisplayshortskip{4pt}
    \setlength\belowdisplayshortskip{4pt}
}
\newcommand{\calG}{\mathcal{G}}
\newcommand{\Gnp}{\mathcal{G}(n, p)}
\def\eps {\varepsilon}
\newcommand{\cupdot}{\mathbin{\mathaccent\cdot\cup}}
\newcommand{\tc}{\mathrm{tc}}
\newcommand{\kc}{\mathrm{kc}}
\newcommand{\ed}{\mathrm{ed}}
\newcommand{\vt}{\mathrm{vt}}
\newcommand{\intersection}{\mathrm{int}}
\newcommand{\hi}{\mathrm{hi}}
\newcommand{\Hi}{\mathrm{Hi}}
\newcommand{\ceil}[1]{
    \left \lceil #1 \right \rceil
}
\newcommand{\floor}[1]{
    \left \lfloor #1 \right \rfloor
}
\def \vx {circle[radius = .25][fill = white, line width=0.5pt]}
\tikzstyle{edge}=[very thick]
\definecolor{bostonuniversityred}{rgb}{0.8, 0.0, 0.0}
\definecolor{arsenic}{rgb}{0.23, 0.27, 0.29}
\tikzstyle{diredge}=[postaction={decorate,decoration={markings,
\tikzstyle{bidiredge}=[postaction={decorate,decoration={markings,
\tikzstyle{mdiredge}=[line width = 1 pt, postaction={decorate,decoration={markings,
\newcommand{\defPt}[3]{
	\def \pt {(#1, #2)}
	\coordinate [at = \pt, name = #3];
}
\newcommand{\fitellipsis}[2] 
{\draw [fill=white]let \p1=(#1), \p2=(#2), \n1={atan2(\y2-\y1,\x2-\x1)}, \n2={veclen(\y2-\y1,\x2-\x1)}
    in ($ (\p1)!0.5!(\p2) $) ellipse [ x radius=\n2/2+0cm, y radius=0.25cm, rotate=\n1];
}
\title{Covering random graphs with monochromatic trees}
\author{
Domagoj Brada\v{c}\thanks{Department of Mathematics, ETH, Z\"urich, Switzerland. Email: \href{mailto:domagoj.bradac@math.ethz.ch} {\nolinkurl{domagoj.bradac@math.ethz.ch}}. Research supported in part by SNSF Grant 200021\_196965.}
 \and
Matija Buci\'c\thanks{School of Mathematics, Institute for Advanced Study and Department of Mathematics, Princeton University, Princeton, 08540 NJ, USA. Email: \href{mailto:matija.bucic@ias.edu} {\nolinkurl{matija.bucic@ias.edu}}.}
}
\date{}
\begin{document}

\maketitle

\begin{abstract}
Given an $r$-edge-coloured complete graph $K_n$, how many monochromatic connected components does one need in order to cover its vertex set? This natural question is a well-known essentially equivalent formulation of the classical Ryser's conjecture which, despite a lot of attention over the last 50 years, still remains open. A number of recent papers consider a sparse random analogue of this question, asking for the minimum number of monochromatic components needed to cover the vertex set of an $r$-edge-coloured random graph $\Gnp$.

Recently, Buci\'{c}, Kor\'{a}ndi and Sudakov established a connection between this problem and a certain Helly-type local to global question for hypergraphs raised about 30 years ago by Erd\H{o}s, Hajnal and Tuza. 
We identify a modified version of the hypergraph problem which controls the answer to the problem of covering random graphs with monochromatic components more precisely. To showcase the power of our approach, we essentially resolve the $3$-colour case by showing that $(\log n / n)^{1/4}$ is a threshold at which point three monochromatic components are needed to cover all vertices of a $3$-edge-coloured random graph, answering a question posed by Kohayakawa, Mendon{\c{c}}a, Mota and Sch{\"u}lke. Our approach also allows us to determine the answer in the general $r$-edge coloured instance of the problem, up to lower order terms, around the point when it first becomes bounded, answering a question of Buci\'c, Kor\'andi and Sudakov.
\end{abstract}

\section{Introduction}
Given a graph $G$ with edges coloured in $r$ colours, how many monochromatic trees, paths or cycles are required to cover the vertices of $G$? Problems of this type were studied extensively by various researchers starting from the 1960`s when Gerencs{\'e}r and Gy{\'a}rf{\'a}s \cite{gerencser1967ramsey} observed that for any $2$-colouring of the edges of the complete graph, all the vertices can be covered using two monochromatic paths. For detailed history on the problems of this type, we refer the reader to a survey by Gy{\'a}rf{\'a}s \cite{gyarfas2016vertex}.

We study the problem of covering an $r$-edge-coloured graph using monochromatic trees. For a graph $G$ and a positive integer $r,$ let $\tc_r(G)$ denote the minimum integer $m$ such that for any $r$-colouring of the edges of $G,$ there exists a collection of $m$ monochromatic trees that cover all vertices of $G.$ As each connected graph has a spanning tree, $\tc_r(G)$ equals the minimum number of monochromatic connected components which cover all vertices. When discussing this problem, we use the terms monochromatic trees and monochromatic components interchangeably. The problem of covering graphs with monochromatic components was first considered by Lov{\'a}sz in 1975 \cite{lovasz-covers} and Ryser in 1970 \cite{henderson} who conjectured that $\tc_r(K_{n}) = r-1.$ That is, given any $r$-colouring of the edges of the complete graph on $n$ vertices, there exists a collection of $r-1$ monochromatic components covering all the vertices. The conjecture was proven for $r \leq 5$ by Tuza \cite{tuza}. On the other hand, it is easy to see that $\tc_r(G) \leq r$ by fixing a particular vertex and choosing the component containing it in every colour.

In recent years, a common theme in combinatorics has been studying sparse random analogues of extremal or Ramsey-type results. For some examples, see e.g. Conlon and Gowers \cite{conlon2016combinatorial} and Schacht \cite{schacht2016extremal}. In line with this theme, Bal and DeBiasio \cite{balpartitioning} initiated the study of covering random graphs with monochromatic components. Following this, Kor{\'a}ndi, Mousset, Nenadov, \v{S}kori\'{c} and Sudakov \cite{cycle-cover} and Lang and Lo \cite{lang} studied a version of this problem in which cycles are used instead of components and Bennett, DeBiasio, Dudek and English \cite{bennett2019large} looked at a similar problem for random hypergraphs. 

In this paper we will focus on the original problem of Bal and DeBiasio \cite{balpartitioning} of covering random graphs with monochromatic components. One of the first natural questions here is, when does $\tc_r(G)$ become bounded? Bal and DeBiasio showed that the answer is when $p$ is somewhere between $\left({\log n}/{n} \right)^{1/r}$ and $\left({\log n}/{n} \right)^{1/(r+1)}.$ This question was subsequently resolved by Buci\'c, Kor\'andi and Sudakov in \cite{bucic2019covering} who determined that the threshold for this property is $\left({\log n}/{n} \right)^{1/r}$. It is worth mentioning here that  $p = (\log n /n)^{1/k}$ is the threshold for any $k$ vertices in $\Gnp$ to have a common neighbour. With this in mind, the above result shows that the threshold for $\tc_r(\Gnp)$ to be finite matches the threshold for any $r$ vertices in $\Gnp$ to have a common neighbour.

Turning to the other side of the spectrum when $p=1,$ we recover the classical problem of Lov\'asz and Ryser and we know that $r$ components suffice. While in this deterministic case the main question was whether $r-1$ components always suffice, in the random one as soon as $p \le 1-\eps,$ for any constant $\eps > 0,$ it is not hard to show that indeed $r$ components are needed. This raises a second very natural question, what is the smallest density $p$ for which we can still ensure that the smallest possible number of components, namely $r$ of them, suffice.

Here Bal and DeBiasio conjectured that this already occurs at the point when $\tc_r(\Gnp)$ becomes finite, in other words that the threshold for $\tc_r(\Gnp) \leq r$ is at the point when any $r$ vertices have a common neighbour (so $p = \left({\log n}/{n}\right)^{1/r}).$ This was proved for $r=2$ by Kohayakawa, Mota and Schacht \cite{kohayakawa2019monochromatic}. However, for $r \geq 3,$ Ebsen, Mota and Schnitzer found a construction showing that w.h.p.\ one needs at least one additional component when $p \ll \left({\log n}/{n}\right)^{1/(r+1)},$ thus disproving the conjecture. It was still widely believed that the conjecture of Bal and DeBiasio is not far from being true, although it was still open how to cover with $r$ components even when the density $p$ is constant. Recently, Buci\'{c}, Kor\'{a}ndi and Sudakov \cite{bucic2019covering} proved that the edge density actually needs to be exponentially larger than conjectured and that this is not far from the truth. In particular, they show that the threshold for $\tc_r(\Gnp) \leq r$ is between the points at which $\frac1{4\sqrt{r}}\cdot 2^{r}$ and $2^r$ vertices have a common neighbour.

The approach in \cite{bucic2019covering} hinges on a close connection it establishes between the tree covering problem and a certain Helly type, local to global problem for hypergraphs which was first considered by Erd\H{o}s, Hajnal and Tuza \cite{erdHos1991local} about 30 years ago. While their approach performs admirably and allows them to get quite good approximate understanding of the behaviour of $\tc_r(\Gnp)$ for any $r$ and $p,$ it turns out to be fundamentally imprecise and hence ill-suited for obtaining more precise results. In this paper we identify, in some sense, the ``correct'' variant of the hypergraph covering problem and establish a connection to the tree covering problem which allows one to obtain significantly more precise results. Before specifying the details in the following subsection we illustrate its performance.

Let us turn back to the problem of determining the threshold at which point $r$ components suffice to cover $\Gnp$. Since the $2$-colour case was resolved completely by Kohayakawa, Mota and Schacht, the $3$-colour case arises as the natural next step. This problem was recently considered by Kohayakawa, Mendon{\c{c}}a, Mota and Sch{\"u}lke \cite{kohayakawa2019covering} who proved that the threshold is at most $\left({\log n}/n \right)^{1/6},$ improving the previous bound of $\left({\log n}/n \right)^{1/8}$ from \cite{bucic2019covering}. In fact, $\left({\log n}/n \right)^{1/6}$ is also the hard limit of the approach of \cite{bucic2019covering} and was implicitly conjectured there to be the answer. On the other hand, the best known example only shows that one needs density at least $\left({\log n}/n \right)^{1/4}$. Given this, Kohayakawa, Mendon{\c{c}}a, Mota and Sch{\"u}lke said it would be very interesting to determine the correct threshold. Using our new connection result we answer their question and completely determine the threshold. Perhaps surprisingly, it turns out one can do much better than in either of \cite{kohayakawa2019covering,bucic2019covering} and the lower bound of $\left({\log n}/n \right)^{1/4}$ due to Ebsen, Mota and Schnitzer turns out to be the truth.

\begin{restatable}{theorem}{thmthreerestate} \label{thm:three_threshold}
  The threshold for $\tc_3(\Gnp) \le 3$ is equal to $\left({\log n}/{n}\right)^{1/4}$. 
\end{restatable}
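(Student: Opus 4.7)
The lower bound $p \gtrsim (\log n/n)^{1/4}$ is already established in the literature via the construction of Ebsen, Mota and Schnitzer cited in the introduction, so the task reduces to the upper bound. Assume $p = \omega((\log n/n)^{1/4})$, fix an arbitrary 3-edge-colouring of $\Gnp$, and write $C_i(v)$ for the colour-$i$ monochromatic component of vertex $v$. A standard Chernoff plus union bound argument shows that w.h.p.\ every four vertices of $\Gnp$ share $(1+o(1))np^4 = \omega(\log n)$ common neighbours; this is exactly why the critical density is $(\log n/n)^{1/4}$.

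The plan is to connect the problem to the modified local-to-global hypergraph problem announced in the introduction. Consider the hypergraph $H$ on $V(\Gnp)$ whose edges are the monochromatic components, so $H$ is the union of the three partitions $\pi_1,\pi_2,\pi_3$ induced by the colour classes. The random structure forces the following local cover property: for any four vertices $v_1,\ldots,v_4$, pick a common neighbour $w$; since the four edges $wv_i$ use only three colours, pigeonhole places two of the $v_i$ in the same $C_j(w)$, so the three parts $C_1(w), C_2(w), C_3(w)$ cover $\{v_1,\ldots,v_4\}$. Crucially, because each 4-tuple has $\omega(\log n)$ common neighbours, this 3-cover is witnessed \emph{robustly} by many possible ``central'' vertices $w$. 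I would then feed $(\pi_1, \pi_2, \pi_3)$ together with this robust local-cover hypothesis into the paper's modified hypergraph covering lemma, whose output is three parts whose union is $V(\Gnp)$, i.e.\ three monochromatic components as required.

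The main obstacle is the hypergraph lemma itself. The Erd\H{o}s--Hajnal--Tuza style local-to-global connection previously used by Buci\'c, Kor\'andi and Sudakov is provably lossy and caps out at $(\log n/n)^{1/6}$, so any proof reaching $(\log n/n)^{1/4}$ must critically exploit the \emph{central-vertex} form of the local cover, namely that the three covering parts always share a common vertex $w$, rather than treating it as an abstract combinatorial condition. To prove the lemma I would argue by contradiction: assume no three parts cover $V$, so for every candidate centre $w$ there is some $v_w \notin C_1(w)\cup C_2(w) \cup C_3(w)$; then iteratively build a sequence $v_0, v_1, v_2, \ldots$ where each $v_{k+1}$ escapes the three parts of the previously chosen vertices, and combine the abundance of common neighbours with the partition structure (each vertex lies in exactly one part of each $\pi_i$) through a careful double counting that yields an inconsistency at the tight density $(\log n/n)^{1/4}$. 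Designing this double count so that the threshold matches the local one is precisely the new idea the paper develops.
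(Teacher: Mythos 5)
Your high-level plan is on target: you correctly identify that the lower bound comes from the Ebsen--Mota--Schnitzer construction (the paper recasts it as the hypergraph $H^*$ of four pairwise-disjoint edges plus fixed pairwise-intersecting covers, showing $\kc_3(3)\ge 4$), and that the upper bound must pass through a modified local-to-global hypergraph lemma whose input exploits the abundance of common neighbourhoods of $4$-tuples. However, the proposal stops precisely at the two points where the real work lies, and at both points the sketch is too vague to be called a proof.

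First, the translation from $\Gnp$ to a hypergraph statement at the \emph{tight} density $(\log n/n)^{1/4}$ is itself nontrivial. Your phrase ``witnessed robustly by many centres $w$'' does not by itself close the gap: at $p$ just above $(\log n/n)^{1/4}$ you get $\omega(\log n)$ common neighbours per $4$-tuple, but what the argument needs is sets of size at least $\Omega(\log n/p)$ (so that any two such sets must have an edge between them, giving the ``intersecting covers'' property), and $\log n \not\gg \log n/p$. The paper's actual mechanism is an iterated growth: from a set of size $\Omega(\log n)$ mapping to the same hypergraph edge, apply the common-neighbourhood expansion (their Lemma 2.3) to produce a set of size $\Omega(\log^2 n)$, and so on, until crossing the $\log n/p$ barrier; this is what motivates the $(k,m)$-coverability definition with the chain $H_0\supseteq H_1\supseteq\cdots\supseteq H_m$. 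Without this iterate-to-amplify step, you are left at the weaker $(\log n/n)^{1/(k+1)}$ bound (Theorem 1.3(b)), not at $(\log n/n)^{1/k}$, which is exactly the imprecision you correctly observed in the older BKS approach.

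Second, the combinatorial core is the statement $\Hi_3(4)\le 3$: every $(4,m)$-coverable $3$-partite $3$-graph has cover number at most $3$. You propose to prove it by contradiction via an escape sequence $v_0,v_1,v_2,\ldots$ and a ``careful double counting,'' but this is a placeholder, not an argument, and you acknowledge it yourself (``precisely the new idea the paper develops''). The paper's proof is not a double count; it is a structural case analysis. It first classifies all $3$-partite $3$-graphs whose edges pairwise intersect into exactly three types (a sunflower through one vertex, a ``two-out-of-three'' structure, or $H^*$), and then in each case exhibits four edges of $H_0$ (found using K\H{o}nig's theorem to extract a matching outside a partial cover) that cannot have a common transversal cover lying in the appropriate level $H_{\ell+1}$, contradicting $(4,m)$-coverability. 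None of this structure is visible in your sketch, and it is not clear that an escape-sequence plus double-counting argument can be made to yield it. Also a small slip: your pigeonhole remark about two of the $v_i$ landing in the same $C_j(w)$ is beside the point --- any neighbour of $w$ automatically lies in one of $C_1(w),C_2(w),C_3(w)$ --- so the ``local cover'' observation does not actually require pigeonhole and does not carry the combinatorial content you need.

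In short: the scaffolding matches the paper's, but the two load-bearing components --- the $(k,m)$-coverability machinery that gives the tight translation, and the structural proof that $\Hi_3(4)\le 3$ --- are missing, and the replacements you gesture at are not developed enough to be verified.
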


Let us now turn back to the point at which $\tc_r(\Gnp)$ becomes finite, so when any $r$ vertices have a common neighbour. Buci\'{c}, Kor\'{a}ndi and Sudakov showed that at this point $r^2(1-o(1)) \le \tc_r(\Gnp)\le (3r-2)r,$\footnote{Exceptionally here the $o(1)$ term is in terms of $r$ and not $n$.} which determined it up to a constant factor, and asked for more precise bounds. Our connection result allows us to determine it up to lower order terms answering a question of Buci\'{c}, Kor\'{a}ndi and Sudakov. 

\begin{theorem}\label{prop:begin-range}
	For any $r \geq 2$ there is a constant $C = C(r)$ such that if $p > C\left(\frac{\log n}{n} \right)^{1/r},$ then w.h.p. $G \sim \Gnp$ satisfies $\tc_r(G) \leq r^2.$
\end{theorem}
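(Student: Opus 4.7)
The plan is to deduce \Cref{prop:begin-range} from the main modified-hypergraph connection result developed in this paper, which reduces tight bounds on $\tc_r(\Gnp)$ to bounds on a combinatorial parameter of an auxiliary hypergraph encoding the monochromatic component structure. The advantage of this route over the direct argument of \cite{bucic2019covering} is that the new connection is sharp enough to capture the leading constant $r^2$ exactly, rather than only up to a factor of $3r-2$, which is precisely what is needed here given the matching lower bound of $r^2(1-o(1))$.

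First I would record the pseudorandom structure of $G \sim \Gnp$ required to invoke the connection. For $p > C(\log n/n)^{1/r}$ with $C = C(r)$ large enough, standard Chernoff plus union-bound estimates show that w.h.p.\ every $k$-subset of vertices with $k \le r$ has $(1\pm o(1))n p^k$ common neighbours; in particular every $r$-tuple shares at least $(C^r/2)\log n$ common neighbours, which is $\gg 1$. Degree concentration and codegree regularity are verified in the same way. These are precisely the hypotheses required to apply the connection result.

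The second, main step is to apply the connection result to reduce the bound $\tc_r(G) \le r^2$ to bounding the relevant hypergraph covering parameter by $r^2$, and then to establish this parameter bound under the pseudorandom hypotheses of the first step. My plan for the combinatorial part is to select $r$ monochromatic components per colour greedily (largest covers first, say), producing $r^2$ components in total, and to argue that they cover $V(G)$ by contradiction. If some vertex $u$ were uncovered then, in each of the $r$ colours, $u$ would lie in a component strictly smaller than all $r$ chosen ones; using the abundance of common neighbours from the first step, one can exhibit a common neighbour of a well-chosen $r$-tuple of vertices across the different colours which forces either the greedy rule to have been violated or the local Helly-type hypothesis underlying the modified hypergraph problem to fail.

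The principal obstacle is this final step. Since the answer is tight up to lower-order terms, the argument has essentially no slack and must extract from the common-neighbour hypothesis exactly the right structural consequence. In particular, an adversarial colouring can create many medium-sized components in each colour (rather than a single giant one), so the bound cannot come from a naive "one giant component per colour" estimate: the adversary's flexibility in spreading edges across the colour classes rules that out. The argument has to work at the level of the modified hypergraph, using its finer Helly-type property rather than the cruder "any $r$ edges share a vertex" condition of \cite{bucic2019covering}, to guarantee that the greedy choice actually succeeds.
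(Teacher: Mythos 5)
Your high-level route matches the paper's: reduce via the new connection lemma (\Cref{lemma:ub_levels} with $k = r$) to a bound on the hypergraph parameter $\Hi_r(r)$, then prove $\Hi_r(r) \le r^2$ (\Cref{lem:bounded}). You also correctly sense that the plain intersecting-$k$-cover connection (\Cref{lemma:upper_bound_simple}) only gives exponent $1/(k+1)$ and so cannot reach $1/r$; one really does need the $(k,m)$-coverability refinement of \Cref{lemma:ub_levels}.

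The genuine gap is the combinatorial core, which you flag yourself as ``the principal obstacle'' but do not close. Your proposed argument --- greedily take the $r$ largest monochromatic components per colour, then derive a contradiction from an uncovered vertex using common neighbours --- is not the paper's proof and does not obviously work: a greedy-by-size selection does not interact cleanly with the level structure $H_0\supseteq\cdots\supseteq H_m$, the adversary can spread each colour over many equal-sized components, and it is not specified which $r$-tuple one should take a common neighbour of nor why that violates anything. The paper's \Cref{lem:bounded} instead bounds the \emph{matching number}: fix an edge $e\in E(H_m)$, let $\ell$ be the largest index such that some $f\in E(H_\ell)$ is disjoint from $e$, and note that a matching of size $r+1$ containing $e$ and $f$ would force (by \ref{cover_in_next}) a transversal cover $e'\in E(H_{\ell+1})$ of $r$ pairwise-disjoint edges; such an $e'$ must be disjoint from $e$, contradicting the maximality of $\ell$. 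Hence every maximal matching has size at most $r$, and the elementary fact that the cover number of an $r$-uniform hypergraph is at most $r$ times its matching number yields $\tau(H)\le r^2$. This matching-plus-levels argument is the ingredient your proposal is missing, and replacing it with a largest-components greedy step is not a viable substitute.
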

\noindent It is worth pointing out that, unlike for us, in order to establish their weaker upper bound for this regime, \cite{bucic2019covering} needed to work directly with random graphs since their connection result does not apply here. This highlights the fact that our connection result allows one to study $\tc_r(\Gnp)$ in terms of the hypergraph problem for any choice of $p$.

\subsection*{The connection to cover number problem for $r$-partite hypergraphs}
Buci\'{c}, Kor{\'a}ndi and Sudakov \cite{bucic2019covering} related the problem of covering a random graph with monochromatic components to the following local-global extremal problem for hypergraph covers, first considered by Erd\H{o}s, Hajnal and Tuza \cite{erdHos1991local} about 30 years ago. 
Suppose $H$ is an $r$-uniform hypergraph in which every $k$ edges can be covered by a set of at most $r$ vertices. How large can the cover number of $H$ be? This natural problem was later studied by Erd\H{o}s, Fon-Der-Flaass, Kostochka and Tuza; Fon-Der-Flaass, Kostochka and Woodall and Kostochka \cite{erdos1992small, kostochka2002transversals,fon1999transversals}. In the specific instance needed for the connection, Buci\'{c}, Kor{\'a}ndi and Sudakov obtain very good bounds which allow them to upper bound the probability needed to guarantee $\tc_r(\Gnp) \le t$. They discovered that, quite remarkably, the connection actually goes two ways provided one considers the following closely related $r$-partite variant of the problem of Erd\H{o}s, Hajnal and Tuza. Suppose $H$ is an $r$-partite $r$-uniform hypergraph in which every $k$ edges have a transversal cover, defined as a cover containing one vertex per part. How large can the cover number of $H$ be?

Using their connection, one can at best place the threshold for $\tc_r(\Gnp) \le t$ between thresholds for any $k$ vertices to have a common neighbour and any $O(k)$ vertices to have a common neighbour, where $k$ is given by the solution of an appropriate instance of the $r$-partite cover number problem.
We identify a further modification of the hypergraph problem, the solution of which places the threshold between those for $k$ and $k+1$ vertices to have a common neighbour, for an appropriate value of $k.$
Our modification asks for us to be able to fix a transversal cover for any $k$ edges in such a way that any pair of these fixed covers intersect.

\begin{restatable}{defn*}{defnintkcoversrestate}
Given an $r$-partite $r$-graph $H,$ we say it has \emph{intersecting $k$-covers} if there exists a function\linebreak $\phi \colon E(H)^k \rightarrow E(H)$ such that $\phi(T)$ is a cover of $T,$ for any $k$-tuple of edges $T,$ and $\phi(T) \cap \phi(T') \neq \emptyset,$ for any $k$-tuples of edges $T, T'.$
\end{restatable}

We are now interested in the maximum possible cover number of an $r$-partite $r$-uniform hypergraph with intersecting $k$-covers. To state our result formally, it will be more convenient to define the inverse function of the answer to this question. For positive integers $r,t,$ we define $\kc_r(t)$ to be the minimum value of $k$ such that every $r$-partite $r$-graph with intersecting $k$-covers has cover number at most $t.$ We show that the threshold for $\tc_r(\Gnp) \leq t$ is between the points when any $k=\kc_r(t)$ and any $k+1$ vertices are likely to have a common neighbour. 

\begin{restatable}{theorem}{thmuptoonerestate} \label{thm:upto1}
  Suppose $r,t$ are positive integers and $k = \kc_r(t) < \infty.$ Then, there exist constants $C,c > 0$ such that for $G \sim \Gnp,$
  \begin{enumerate}[label=\alph*)]
    \item if $p < c \left( \frac{\log n}{n} \right)^{\frac{1}{k}},$ then w.h.p. $\tc_r(G) > t,$ and
    \item if $p > C \left( \frac{\log n}{n} \right)^{\frac{1}{k+1}},$ then w.h.p. $\tc_r(G) \leq t.$
  \end{enumerate}
\end{restatable}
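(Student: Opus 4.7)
The plan is to prove the two parts separately, each leveraging $k = \kc_r(t)$ in opposite directions.

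For part (b), assume $p > C(\log n / n)^{1/(k+1)}$ for a large constant $C = C(r,t)$. A standard Chernoff argument ensures w.h.p.\ every $(k+1)$-subset of $V(G)$ has $\gg \log n$ common neighbours. Fix any $r$-edge-colouring of $G$ and form the $r$-partite $r$-uniform hypergraph $H$ whose parts are the sets of monochromatic components in each colour, with one edge $e_v := (C_1(v), \ldots, C_r(v))$ per $v \in V(G)$, where $C_i(v)$ denotes the colour-$i$ component containing $v$. Covers of $H$ of size at most $t$ biject with collections of at most $t$ monochromatic components covering $V(G)$, so by the definition of $\kc_r(t) = k$ it suffices to construct a function $\phi : E(H)^k \to E(H)$ witnessing intersecting $k$-covers. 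I would take $\phi(T)$ to be a suitably chosen common neighbour in $V(G)$ of the $k$-tuple $T$, which automatically makes $\phi(T)$ cover $T$ (each $v_i \in T$ lies in the colour-$c(\phi(T)v_i)$ component of $\phi(T)$). For the intersection condition $\phi(T) \cap \phi(T') \neq \emptyset$, which unpacks to $\phi(T)$ and $\phi(T')$ lying in a common monochromatic component of $G$, the plan is to fix a pivot $v^* \in V(G)$, let $i^* \in [r]$ be a colour in which $v^*$ has the most neighbours (so $|N_{i^*}(v^*)| \ge np/(2r)$ and $N_{i^*}(v^*) \subseteq C_{i^*}(v^*)$), and require $\phi(T) \in N(T) \cap N_{i^*}(v^*)$ for every $T$; all $\phi(T)$ then contain the vertex $C_{i^*}(v^*)$ of $H$, yielding the required pairwise intersection.

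For part (a), by minimality of $k$ there is an $r$-partite $r$-uniform hypergraph $H^*$ admitting intersecting $(k-1)$-covers but with cover number strictly greater than $t$. When $p < c(\log n / n)^{1/k}$, w.h.p.\ many $k$-subsets of $V(G)$ have no common neighbour. I would use $H^*$ as a template for an adversarial colouring of $G$: assign each vertex $v$ of $G$ a type $\tau(v) \in E(H^*)$, and colour each $uv \in E(G)$ by a coordinate $j$ on which $\tau(u)$ and $\tau(v)$ agree. The intersecting $(k-1)$-cover property of $H^*$ combined with the abundance of common-neighbourhood gaps in $\Gnp$ should allow a consistent type assignment so that every edge of $G$ is colourable, while ensuring that monochromatic components of $G$ project onto vertices of $H^*$. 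Then any monochromatic-component cover of $V(G)$ induces a cover of $H^*$ and must therefore contain strictly more than $t$ components, giving $\tc_r(G) > t$.

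The hardest step is in part (b): guaranteeing $N(T) \cap N_{i^*}(v^*) \neq \emptyset$ uniformly over all $k$-tuples $T$, despite $N_{i^*}(v^*)$ being a function of the adversarial colouring. The per-tuple expected intersection size is at least $np^{k+1}/(2r) \gg \log n$, giving exponential concentration for each fixed $T$, but a naive union bound over all possible colour partitions of $N(v^*)$ is too crude, and the argument must exploit that $N_{i^*}(v^*)$ is a particular structured subset (namely a colour class) rather than an arbitrary set of that size. A secondary difficulty arises in part (a), where the type assignment $\tau$ must be made so that every edge $uv$ of $G$ has intersecting endpoint-types in $H^*$, likely requiring a careful greedy argument exploiting the $k$-tuple gaps in $G$.
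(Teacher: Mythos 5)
Your plan for part (a) is essentially the paper's argument: take an $r$-partite $r$-graph $H^*$ with intersecting $(k-1)$-covers and $\tau(H^*)>t$ (which exists by minimality of $k=\kc_r(t)$), find an independent set $I$ in $\Gnp$ of size $|E(H^*)|$ in which no $k$ vertices have a common neighbour (Lemma~\ref{lemma:random_for_lb}), assign the edges of $H^*$ bijectively to $I$, propagate a type assignment $\ed_0 \colon V(G) \to E(H^*)$ to the rest of $G$ via the fixed $(k-1)$-covers, and colour each edge of $G$ by a coordinate of agreement of the endpoint types. This is Lemma~\ref{lemma:lower_bound_simple} together with Lemma~\ref{lemma:colouring_lb}, modulo bookkeeping.

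For part (b), the pivot idea has a genuine gap, which you identify yourself but which I do not think can be patched in the form you propose. After fixing a pivot $v^*$ and a majority colour $i^*$ at $v^*$, you need $N(T)\cap N_{i^*}(v^*)\neq\emptyset$ for \emph{every} $k$-tuple $T$, but the colouring is committed by the adversary after $G$ is revealed. For a given $T$ the set $N(T\cup\{v^*\})$ has size on the order of $np^{k+1}=o(np)$, so the adversary can colour all the $v^*$-edges into that set with colours other than $i^*$ (and indeed arrange, for each colour $i$, a tuple $T_i$ with $N(T_i)\cap N_i(v^*)=\emptyset$) while barely perturbing which colour is the majority at $v^*$. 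There is no obvious structural reason a single $i^*$ should work for all tuples simultaneously, and a union bound over colour partitions of $N(v^*)$ is indeed too weak.

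The paper avoids this entirely, and the ingredient you are missing is Bollob\'as's set-pair inequality (Theorem~\ref{thm:bollobas}). Arguing by contradiction from $\tau(H(G,c))\ge t+1$, Lemma~\ref{lemma:connection} part~\ref{small_subgraph} produces a witness set $W\subseteq V(G)$ of size at most $\binom{r+t}{r}$ such that the collapsed hypergraph $H(G,W,c)$ (all components disjoint from $W$ are merged into a single vertex per part) still has $\tau\ge t+1$ --- and, crucially, has at most $|W|+1$ vertices per part, a bound depending only on $r$ and $t$. Then for any $k$-tuple $T$ the common neighbourhood $A=N(T)$ has size $\gg \log n/p$ (Lemma~\ref{lemma:common_neigh_k_1} applied with $k+1$), and by pigeonhole (Lemma~\ref{lemma:connection} part~\ref{large_subset}) a constant fraction of $A$ maps under $\ed$ to a single edge $\phi(T)$ of $H(G,W,c)$, with this preimage still of size $\gg \log n/p$. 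Any two such preimage sets are joined by an edge of $G$ (Lemma~\ref{lemma:no_large_empty_bipartite}), and $G$-edges map to intersecting edges of $H(G,W,c)$ (Lemma~\ref{lemma:connection} part~\ref{neighbours_intersecting}). This gives intersecting $k$-covers for $H(G,W,c)$ under \emph{any} colouring, with no pivot and no fixed colour class, contradicting $\kc_r(t)=k$. You should rework part (b) along these lines; the pivot-based sketch does not survive an adversarial colouring, and the boundedness of $H(G,W,c)$ is exactly what replaces it.
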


This result comes very close to showing that indeed the threshold for $\tc_r(\Gnp) \le t$ matches the one for any $k$ vertices in $\Gnp$ to have a common neighbour, for some $k$ depending only on $r$ and $t$. It turns out that, at least in certain cases, this is in some sense the best one can hope for since, for example, the threshold for $\tc_3(\Gnp)\le 4$ is somewhere between $n^{-1/3+\eps}$ and $n^{-1/4-\eps},$ for some $\eps>0$ (see \cite{thesis} and the end of Section 5).

With the goal of being able to determine the thresholds precisely we identify a further refinement of the hypergraph covering problem which allows for an even stronger connection result. In particular, this is behind our proof of \Cref{thm:three_threshold}.

\textbf{Notation.} We say that a function $p^*:\mathbb{N} \to [0,1]$ is the \emph{threshold} for an increasing property of graphs if there are constants $C,c>0$ such that $p > C p^*(n)$ implies $\Gnp$ satisfies the property w.h.p.\ and $p<cp^*(n)$ implies $\Gnp$ does not satisfy the property w.h.p. We note that throughout the paper we will consider $r,$ the number of colours (or the number of parts in the hypergraphs setting), to be fixed.

\medskip
\textbf{Organisation of the paper.} 
The paper is organised as follows. In Section 2, we state several known results used in our proofs as well as a few easy statements about random graphs. In Section 3, we develop the connection between covering random graphs with monochromatic components and covering hypergraphs. In particular, we prove Theorem~\ref{thm:upto1} as well as a further refinement which we later use when discussing the case $r=3.$ Finally, in Section 4 we consider the $3$-coloured case of the tree covering problem for random graphs, where we prove Theorem~\ref{thm:three_threshold}.

\section{Preliminaries}
We begin with several basic definitions about (hyper)graphs. The sets of vertices and edges of a (hyper)graph $G$ are denoted by $V(G)$ and $E(G),$ respectively. An \emph{$r$-uniform hypergraph} or an \emph{$r$-graph} is a hypergraph with all edges of size $r.$ An $r$-graph $H$ is called $r$-partite if there exists a partition of its vertex set $V(H) = V_1 \cupdot V_2 \cupdot \dots \cupdot V_r$ such that for all $e \in E(H), \, 1 \leq i \leq r$ we have $|e \cap V_i| = 1$ and we use $e^i$ to denote the vertex of $e$ in part $V_i.$ A (vertex) cover of a hypergraph is a set of vertices intersecting every edge. The (vertex) cover number of a hypergraph $H,$ denoted by $\tau(H),$ is the minimum size of a vertex cover of $H.$

Given a graph $G$ and a set of vertices $A \subseteq V(G),$ we denote 
\[ N(A) = A \cup \{ v \in V(G) \; \vert \; \exists u \in A, \, uv \in E(G) \}. \]
Given $k$ sets of vertices $A_1, \dots, A_k,$ we define $N(A_1, \dots, A_k) = \bigcap_{i=1}^k N(A_i).$

We say that a random graph $G \sim \Gnp$ satisfies a certain property with high probability (w.h.p.) if the probability that $G$ satisfies the property tends to $1$ as $n \to \infty$. We now list a number of mostly standard and easy to verify properties of random graphs which we will use in our proofs.
\begin{restatable}{lemma}{nolargeemptybipartiterestate}[e.g. \cite{cycle-cover}, Lemma 3.8]

  \label{lemma:no_large_empty_bipartite}
  Let $p = p(n) > 0.$ The random graph $G \sim \Gnp$ satisfies the following property w.h.p.: for any two disjoint subsets $A, B \subseteq V(G)$ of size at least $\frac{10 \log n}{p},$ there is an edge between $A$ and $B.$
\end{restatable}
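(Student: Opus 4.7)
The plan is a direct first-moment argument. By monotonicity (adding vertices can only add edges), it suffices to prove the statement for pairs of disjoint sets $A,B$ of size exactly $a := \lceil 10\log n / p\rceil$; if no such pair has all edges missing between them, then neither does any larger pair.

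First I would bound the number of ordered pairs $(A,B)$ of disjoint subsets of $V(G)$ with $|A|=|B|=a$ by $\binom{n}{a}^2 \le n^{2a}$. For a fixed such pair, the events $\{uv \notin E(G)\}$ for $u \in A, v \in B$ are independent (since $A$ and $B$ are disjoint, these are $a^2$ distinct potential edges), so the probability that $A$ and $B$ span no edge equals $(1-p)^{a^2} \le e^{-p a^2}$. A union bound therefore gives that the probability of a bad pair existing is at most
\[ n^{2a} \cdot e^{-p a^2} \;=\; \exp\bigl(a(2\log n - p a)\bigr). \]

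Next I would plug in the choice of $a$: since $pa \ge 10 \log n$, the exponent is at most $a(2\log n - 10\log n) = -8 a \log n \le -80 (\log n)^2 / p$. In particular this tends to $-\infty$ as $n \to \infty$ (for any $p = p(n) \le 1$), so the probability of a bad pair tends to $0$. Note that if $p$ is so small that $a \ge n/2$, the statement is vacuous (no two disjoint sets of size $a$ exist), and the case $p = 0$ can be handled separately or excluded as $a$ would be infinite; the standard convention is that the hypothesis $|A|, |B| \ge 10\log n/p$ cannot be satisfied in those degenerate ranges, so we may assume $a \le n/2$ throughout. The only mild subtlety is the uniformity in $p$, but since the bound $e^{-8a\log n}$ is exponentially small regardless of the value of $p$, there is no real obstacle.
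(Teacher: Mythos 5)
Your argument is correct and is the standard first-moment/union bound calculation one would expect for a statement of this type. The paper itself does not prove this lemma—it cites it from Kor\'andi, Mousset, Nenadov, \v{S}kori\'{c} and Sudakov (their Lemma~3.8), whose proof is the same union-bound estimate. Your reduction to sets of size exactly $a=\lceil 10\log n/p\rceil$ by monotonicity, the bound $\binom{n}{a}^2 e^{-pa^2}\le \exp\bigl(a(2\log n-pa)\bigr)\le \exp(-8a\log n)$, and the handling of the degenerate ranges ($p$ too small for disjoint sets of size $a$ to exist) are all in order. The only cosmetic remark is that you do not actually need the stronger bound $-80(\log n)^2/p$: since $a\ge 1$, the simpler bound $\exp(-8a\log n)\le n^{-8}\to 0$ already suffices, and avoids even needing to assume $p\le 1$ at that step.
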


\begin{restatable}{lemma}{commonneighrestate}[\cite{balpartitioning}, Lemma 6.1.]
  \label{lemma:common_neigh_k_1}
  Let $k \geq 1$ be an integer and let $p > C \left(\frac{\log n}{n}\right)^{\frac{1}{k}}$ for some large enough constant $C.$ Then in $G \sim \Gnp,$ w.h.p. any $k-1$ vertices have a common neighbourhood of size at least $\frac{C}{2} \cdot \frac{\log n}{p}.$
\end{restatable}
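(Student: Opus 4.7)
The plan is to combine a first-moment computation with a Chernoff concentration bound and a union bound over the $\binom{n}{k-1}$ possible $(k-1)$-vertex sets. One may assume $k \geq 2$, since for $k = 1$ the statement reduces to $|V(G)| = n \geq (C/2)\log n / p$, which is immediate from the hypothesis $p > C\log n / n$.

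Fix a set $S$ of $k-1$ vertices, and let $X_S$ denote the number of common neighbors of $S$ in $V(G) \setminus S$. A vertex $u \notin S$ lies in the common neighborhood precisely when all $k-1$ edges from $u$ to $S$ are present, and these events are mutually independent across different choices of $u$. Hence $X_S \sim \mathrm{Bin}(n - k + 1,\, p^{k-1})$ with mean
\[
\mu \;:=\; (n-k+1)\, p^{k-1} \;\geq\; \tfrac{1}{2}\, n p^{k-1} \;=\; \frac{n p^k}{2p} \;\geq\; \frac{C^k}{2}\cdot\frac{\log n}{p},
\]
where the last inequality uses $p^k \geq C^k \log n / n$. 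For $C$ sufficiently large one has $\mu/2 \geq (C/2)(\log n)/p$, so it suffices to show that $X_S \geq \mu/2$ simultaneously for every $S$.

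A standard multiplicative Chernoff bound gives $\Pr[X_S < \mu/2] \leq \exp(-\mu/8)$. Taking a union bound over the at most $n^{k-1}$ choices of $S$, the probability that some $(k-1)$-set fails is at most $n^{k-1}\exp(-\mu/8)$. For $k \geq 2$, substituting the hypothesis on $p$ yields the crude lower bound $\mu \gtrsim n^{1/k}(\log n)^{(k-1)/k}$, which, once $C$ is chosen large enough, dominates $(k-1)\log n$ by polynomial factors, so the union-bound failure probability is $o(1)$.

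There is no genuine obstacle: the only thing to check is that the constant $C$ can be chosen large enough for (i) $\mu/2 \geq (C/2)\log n / p$ and (ii) $n^{k-1}\exp(-\mu/8) = o(1)$ to hold simultaneously. Both reduce to a single numerical inequality in $C$ (depending on $k$ only), and are easily verified.
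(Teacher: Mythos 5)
Your proof is correct; the paper itself does not reprove this lemma but cites it from Bal and DeBiasio, and your argument (Binomial distribution of the common neighbourhood size, multiplicative Chernoff bound with $\delta=1/2$, union bound over all $\binom{n}{k-1}$ sets, with $\mu = \Theta(np^{k-1}) \gtrsim n^{1/k}(\log n)^{(k-1)/k}$ polynomially large so the union bound is negligible) is the standard one and matches what the cited reference does. The only cosmetic point is that $\mu$ dominating $(k-1)\log n$ does not actually require $C$ large — the polynomial growth in $n$ suffices on its own — and $C$ large is only needed to guarantee $\mu/2 \geq (C/2)\log n/p$, but this does not affect correctness.
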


\begin{restatable}{lemma}{largeneighsrestate}[\cite{bucic2019covering}, Lemma 2.3]
  \label{lemma:large_neigh}
	Let $k \geq 1$ be an integer and let $C = C(k) > 0$ be a large enough constant. If $p > C\left(\frac{\log n}{n}\right)^{\frac{1}{k}},$ the random graph $G \sim \Gnp$ satisfies the following w.h.p.: for any subset $U \subseteq V(G), \, 1 \leq |U| \leq \frac{1}{p}$ and any $k-1$ vertices $v_1 \dots, v_{k-1} \in V,$ 
\[ \label{k_neigh_property} \left|N\left(U, \{v_1\}, \{v_2\}, \dots \{v_{k-1}\}\right)\right| \geq \frac{C \log n}{6} \cdot |U|. \]
\end{restatable}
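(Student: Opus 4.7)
The plan is a standard first-moment plus Chernoff concentration plus union bound argument. Fix an integer $u$ with $1 \le u \le 1/p$, a set $U \subseteq V(G)$ of size $u$, and a tuple $(v_1, \ldots, v_{k-1}) \in V^{k-1}$, and write $W = U \cup \{v_1, \ldots, v_{k-1}\}$. For each $w \notin W$ define the indicator $X_w = \mathbf{1}[w \in N(U, v_1, \ldots, v_{k-1})]$. Since $X_w$ depends only on the edges incident to $w$, the family $\{X_w : w \notin W\}$ is mutually \emph{independent}; this is the structural observation that makes Chernoff applicable and it survives even when the $v_i$ coincide with each other or lie inside $U$, because we only consider $w\notin W$.

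To lower bound $\Pr[X_w=1]$, note that $\{X_w=1\}$ is the intersection of the monotone increasing events $A=\{w\text{ has a neighbour in }U\}$ and $B_i=\{wv_i\in E(G)\}$ for $i=1,\dots,k-1$. By FKG,
\[
\Pr[X_w = 1] \;\ge\; \Pr[A]\prod_i \Pr[B_i] \;=\; \bigl(1-(1-p)^u\bigr)\,p^{k-1} \;\ge\; \tfrac{1}{2}\,u\,p^k,
\]
using $1-(1-p)^u \ge up/2$ for $up\le 1$. FKG is the right tool precisely because it gives a uniform lower bound with no case split on whether any $v_i$ lies in $U$. Summing over the $\ge n/2$ vertices $w\notin W$ and using the hypothesis $np^k\ge C^k\log n$ gives $\E\bigl[\sum_{w\notin W} X_w\bigr] \ge \tfrac{1}{4}C^k u\log n$.

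Chernoff applied to the independent Bernoullis $X_w$ then yields, for any fixed $\delta\in(0,1)$,
\[
\Pr\!\Bigl[\sum_{w\notin W} X_w \le (1-\delta)\tfrac{1}{4}C^k u \log n\Bigr] \;\le\; \exp\!\bigl(-\alpha(\delta)\,C^k u\log n\bigr)
\]
for an absolute $\alpha(\delta)>0$. Since there are at most $\binom{n}{u}n^{k-1}\le n^{u+k-1}$ choices of $(U,v_1,\dots,v_{k-1})$ with $|U|=u$, a union bound over these and over $u$ gives total failure probability at most
\[
\sum_{u\ge 1} \exp\!\bigl(\log n\cdot(u+k-1-\alpha(\delta)\,C^k u)\bigr),
\]
which is $o(1)$ once $C=C(k)$ is chosen so large that $\alpha(\delta)C^k\ge k+2$. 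The conclusion $|N(U,v_1,\dots,v_{k-1})|\ge (C/6)\,u\log n$ then follows by selecting $\delta$ and $C$ so that $(1-\delta)C^k/4\ge C/6$, which is arranged for every $k\ge 1$ by taking $C$ sufficiently large in terms of $k$.

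The only potential obstacle is purely bookkeeping: the vertices $v_1,\dots,v_{k-1}$ may coincide with each other or lie in $U$, which would derail a naive computation of $\Pr[X_w=1]$ by multiplying independent probabilities. Invoking FKG to bound $\Pr[X_w=1]$ and restricting to $w\notin W$ to keep the $X_w$ independent are the two moves that together circumvent any case analysis; everything else is a routine concentration-union bound package.
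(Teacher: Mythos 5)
Your proof is correct, and since the paper simply cites this result as Lemma 2.3 of \cite{bucic2019covering} without reproducing a proof, there is no in-paper argument to compare against; your first-moment/Chernoff/union-bound structure is the standard route one would expect the cited source to use. The one point worth flagging is that the appeal to the FKG inequality, while valid (the events $A$ and $B_i$ are all increasing), is heavier machinery than needed: for $w \notin W$ the two events ``$w$ is adjacent to every vertex of $\{v_1,\dots,v_{k-1}\}$'' and ``$w$ has a neighbour in $U$'' are literally independent when $\{v_1,\dots,v_{k-1}\}\cap U = \emptyset$, and when the intersection is nonempty the first event already implies the second, so a two-line case split gives $\Pr[X_w=1]\ge \tfrac12 up^k$ without FKG. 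Also, your phrase ``$X_w$ depends only on the edges incident to $w$'' should more precisely read ``depends only on edges between $w$ and $W$'' --- that is the disjointness that actually yields mutual independence of $\{X_w\}_{w\notin W}$, since distinct $w,w'\notin W$ do share the potential edge $ww'$, which your indicators happen not to examine. These are cosmetic; the estimates, the restriction to $w\notin W$ to preserve independence, and the bookkeeping in the union bound and constant-matching at the end are all sound.
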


\begin{lemma}\label{lemma:random_for_lb}[\cite{balpartitioning}, Lemma 6.4 (ii)]
Let $k\ge 2$ and $m$ be integers. There is a constant $c=c(m,k)>0$ such that for $G\sim \Gnp$ with $p\le c\left(\frac{\log n}{n}\right)^{1/k},$ w.h.p.\ there is an independent set $S$ in $G$ of size $m$ such that no $k$ vertices in $S$ have a common neighbour in $G$.
\end{lemma}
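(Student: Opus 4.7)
The plan is to apply the second moment method to the count $X$ of \emph{good} $m$-subsets of $V(G)$, where a set $S$ of size $m$ is good if $S$ is independent in $G$ and no $k$-subset of $S$ has a common neighbour in $G$. The existence of a good $m$-set is monotone decreasing in $p$, so it suffices to treat $p = c(\log n/n)^{1/k}$ for a small constant $c = c(m,k)$; showing $\E[X] \to \infty$ together with $\Var(X) = o(\E[X]^2)$ then finishes via Chebyshev.

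For the first moment, fix an $m$-subset $S$. If $S$ is independent then no vertex of $S$ can itself be a common $k$-neighbour, so ``$S$ good'' decomposes into (i) no internal edge, and (ii) for each external $u \notin S$, fewer than $k$ neighbours in $S$; the events in (ii) are mutually independent across $u$. Writing $q := \Pr[\Bin(m,p) < k]$, one has $1 - q = (1+o(1))\binom{m}{k} p^k$, and a brief calculation with $p = c(\log n/n)^{1/k}$ yields $\Pr[S \text{ good}] = (1-p)^{\binom{m}{2}} q^{n-m} = n^{-(1+o(1))\binom{m}{k}c^k}$, whence $\E[X] = \Theta\bigl(n^{m - (1+o(1))\binom{m}{k} c^k}\bigr)$. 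Choosing $c$ small enough that $\binom{m}{k} c^k < 1$ makes $\E[X] \to \infty$.

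For the variance, I would classify pairs $(S,S')$ by $j := |S \cap S'|$. Pairs with $j \geq 1$ are routine: the crude bound $\Pr[S,S' \text{ good}] \leq \Pr[S \text{ good}]$ combined with $O(n^{2m-j})$ such pairs contributes at most $O(n^{2m-1} \Pr[S \text{ good}])$ to $\E[X^2]$, i.e.\ $O(n^{-1 + \binom{m}{k}c^k}) \E[X]^2 = o(\E[X]^2)$. The delicate case is $j = 0$: one cannot claim exact independence of ``$S$ good'' and ``$S'$ good'', because the bipartite edges between $S$ and $S'$ participate in both events (each $v \in S'$ needs $< k$ neighbours in $S$ for the first, each $w \in S$ needs $< k$ neighbours in $S'$ for the second). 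Bounding the joint bipartite contribution by $q^m$ via the one-sided degree constraint yields
\[
\Pr[S, S' \text{ good}] \;\leq\; q^{-m}\, \Pr[S \text{ good}]\Pr[S' \text{ good}] \;=\; \bigl(1 + O(mp^k)\bigr) \Pr[S \text{ good}]\Pr[S' \text{ good}],
\]
and since $mp^k = O(\log n/n) = o(1)$, the disjoint pairs contribute $(1+o(1))\E[X]^2$ to $\E[X^2]$, hence $o(\E[X]^2)$ to the variance.

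The one step requiring genuine attention is this bipartite coupling for disjoint $S, S'$: a naive ``independence for disjoint pairs'' claim is simply false, and one must observe that the multiplicative error $q^{-m} - 1 = O(mp^k)$ is negligible relative to the scales at play. Once that is in hand, Chebyshev closes the argument with the explicit choice $c := (2\binom{m}{k})^{-1/k}$, which satisfies $\binom{m}{k} c^k = 1/2 < 1$ and therefore makes both the first moment diverge and the variance ratio tend to zero.
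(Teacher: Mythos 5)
The paper does not actually prove this lemma: it is cited verbatim from Bal--DeBiasio (\cite{balpartitioning}, Lemma~6.4(ii)), with the paper only adding a one-line remark reducing the case $m\le k$ to $m=k+1$ by taking a subset. So there is no in-paper proof to compare against; you are supplying a self-contained argument where the paper defers to an external source.

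Your second-moment proof is correct, and the one place where a careless reader would go wrong --- the disjoint pairs $j=0$ --- is exactly where you focus your attention, and the handling is right. Concretely, writing $A_S$ for ``$S$ independent'' and splitting the external-degree constraint $B_S$ according to whether the external vertex lies in $S'$ or not, one gets for disjoint $S,S'$ that
\[
\frac{\Pr[S,S'\text{ good}]}{\Pr[S\text{ good}]\Pr[S'\text{ good}]}=\frac{\rho}{q^{2m}},\qquad \rho:=\Pr\bigl[B_S^{S'}\cap B_{S'}^{S}\bigr]\le \Pr\bigl[B_S^{S'}\bigr]=q^m,
\]
so the ratio is at most $q^{-m}=1+O(mp^k)=1+O(\log n/n)$, exactly as you assert. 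Combined with the crude $j\ge 1$ bound and $\E[X]=\Theta\bigl(n^{m-(1+o(1))\binom{m}{k}c^k}\bigr)\to\infty$, Chebyshev finishes the job, and the monotonicity reduction to $p=c(\log n/n)^{1/k}$ is legitimate. The only small caveat: your explicit constant $c=(2\binom{m}{k})^{-1/k}$ presupposes $m\ge k$; for $m<k$ the ``no $k$ vertices have a common neighbour'' condition is vacuous, $q=1$, and the argument degenerates into a trivial independent-set count, so one should either treat that case separately or fall back on the same reduction the paper uses ($m\mapsto k+1$ and take a subset). I cannot verify whether your route coincides with Bal--DeBiasio's original proof, but a second-moment argument of this shape is the natural and standard way to establish such a statement, and yours is sound.
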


We note that this lemma came with the assumption $m>k$ in \cite{balpartitioning}, however for $m \le k$ one can simply apply the lemma with $m=k+1$ to find an independent set of size $k+1$ with the desired property and take any $m$ vertex subset as our desired $S$.

We finish this section with two classical extremal results.
\begin{theorem}[Bollob\'{a}s \cite{bollobas1965generalized}]
  \label{thm:bollobas}
  Let $H$ be an $r$-graph with cover number at least $t.$ Then, there exists a subgraph of $H$ with cover number $t$ and at most $\binom{r + t - 1}{r}$ edges.
\end{theorem}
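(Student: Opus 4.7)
The plan is to apply the classical Bollob\'as set-pair inequality, which states that if $(A_i, B_i)_{i=1}^m$ are pairs of finite sets with $A_i \cap B_i = \emptyset$ for all $i$ and $A_i \cap B_j \neq \emptyset$ for all $i \neq j$, then
\[
\sum_{i=1}^{m} \binom{|A_i| + |B_i|}{|A_i|}^{-1} \leq 1.
\]
Once we have this, the main task is to produce a set-pair system out of the hypergraph.

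First I would reduce to a minimal witness. Pick any subgraph $H' \subseteq H$ with $\tau(H') = t$ which is minimal in the sense that deleting any edge drops the cover number below $t$; such an $H'$ exists since $H$ itself has $\tau(H) \geq t$ and we can always delete edges until minimality. By the minimality, for every edge $e \in E(H')$, the hypergraph $H' \setminus \{e\}$ has a cover $B_e$ of size at most $t - 1$. We can assume $|B_e| = t - 1$ by padding, and crucially $B_e$ does not meet $e$, because otherwise $B_e$ would cover every edge of $H'$, contradicting $\tau(H') = t$.

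Next I would verify the set-pair hypotheses for the family $\{(e, B_e) : e \in E(H')\}$. For any distinct edges $e, f \in E(H')$, the set $B_e$ covers $H' \setminus \{e\}$, and in particular $B_e \cap f \neq \emptyset$. Combined with $e \cap B_e = \emptyset$ from the previous step, the pairs satisfy the Bollob\'as hypotheses. Since $|e| = r$ and $|B_e| = t - 1$ for every edge, the inequality becomes
\[
|E(H')| \cdot \binom{r + t - 1}{r}^{-1} \leq 1,
\]
giving $|E(H')| \leq \binom{r + t - 1}{r}$, as desired.

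The only conceptual point is whether we want a subgraph with cover number exactly $t$ rather than at least $t$; this is automatic because in the minimization step we can first delete edges until the cover number equals $t$ (possible since $\tau(H) \geq t$ and removing a single edge decreases $\tau$ by at most one), then continue deleting to enforce edgewise minimality at cover number $t$. The main ``obstacle'' is really just invoking the set-pair inequality correctly; everything else is a short bookkeeping argument.
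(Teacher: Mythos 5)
The paper itself does not prove Theorem~\ref{thm:bollobas}; it states it with a citation to Bollob\'as's original article. Your derivation via the set-pair inequality is correct and is in fact the standard route (it is also Bollob\'as's own argument, with the theorem in the paper being a direct corollary of the set-pair inequality applied to a critical subhypergraph). Each step is valid: a critical $H'$ with $\tau(H') = t$ exists because deleting one edge decreases the cover number by at most one; for an edge $e$ of such an $H'$, any cover $B_e$ of $H' \setminus \{e\}$ with $|B_e| \le t-1$ is automatically disjoint from $e$ (else it would be a cover of $H'$ of size $< t$); and the cross-intersection condition $B_e \cap f \neq \emptyset$ for $f \neq e$ holds since $B_e$ covers $H'\setminus\{e\}$. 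One small remark: the padding of $B_e$ up to size $t-1$ is unnecessary (and, if done using vertices of $H'$, one must be careful to avoid $e$), because if $|B_e| \le t-1$ then each term $\binom{|e|+|B_e|}{|e|}^{-1}$ is at least $\binom{r+t-1}{r}^{-1}$, so the inequality already yields $|E(H')| \le \binom{r+t-1}{r}$ directly.
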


\begin{theorem}[K\H{o}nig's theorem \cite{konig}] \label{thm:konig}
  The size of a largest matching in a bipartite graph equals its cover number.
\end{theorem}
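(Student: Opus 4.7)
The plan is to prove the classical K\H{o}nig--Egerv\'ary theorem via the standard alternating paths argument. Write $G = (A \cup B, E)$ for the bipartite graph, and let $\nu(G)$ and $\tau(G)$ denote the maximum matching and minimum cover sizes. The easy direction $\nu(G) \le \tau(G)$ is weak duality: given any matching $M$ and any cover $C$, each edge of $M$ must be hit by at least one vertex of $C$, and these vertices are all distinct because the edges of $M$ are vertex-disjoint, so $|M| \le |C|$.

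For the nontrivial direction $\tau(G) \le \nu(G)$, the strategy is to take a maximum matching $M$ and \emph{explicitly construct} a cover of size exactly $|M|$. Let $U \subseteq A$ be the set of $M$-unmatched vertices, and let $Z \subseteq V(G)$ consist of all vertices reachable from $U$ by $M$-alternating paths that start with a non-matching edge. Define
\[
  C := (A \setminus Z) \cup (B \cap Z).
\]
I would then verify two properties. \textbf{(i)} $C$ is a cover: if some edge $ab$ with $a \in A$, $b \in B$ avoided $C$, then $a \in A \cap Z$ and $b \in B \setminus Z$; but then the alternating path to $a$ (ending with a matching edge or consisting of just $a \in U$) can be extended by the non-matching edge $ab$ to reach $b$, forcing $b \in Z$, a contradiction. \textbf{(ii)} $|C| = |M|$: every vertex of $C$ is $M$-matched (unmatched $A$-vertices lie in $U \subseteq Z$, hence not in $A \setminus Z$; an unmatched vertex in $B \cap Z$ would give an $M$-augmenting path, contradicting maximality of $M$), and no matching edge has both endpoints in $C$ (if $a \in A \setminus Z$ were matched to $b \in B \cap Z$, then extending the alternating path to $b$ by the matching edge $ba$ would put $a \in Z$, contradiction). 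Hence the matched partner map injects $C$ into $M$, yielding $|C| \le |M|$.

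Combining (i) and (ii) gives a cover of size at most $|M| = \nu(G)$, so $\tau(G) \le \nu(G)$, which together with weak duality finishes the proof. The only real subtlety is step (ii), ensuring the map that sends each $v \in C$ to the unique edge of $M$ incident to $v$ is well-defined and injective; this is exactly where the definition of $Z$ via alternating paths pays off, because it forces the matched partners of $A \setminus Z$ to lie in $B \setminus Z$ and the matched partners of $B \cap Z$ to lie in $A \cap Z$, so matching edges incident to $C$ are indeed distinct.
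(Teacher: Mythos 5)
The paper does not prove K\H{o}nig's theorem; it is stated as a classical result and cited directly to K\H{o}nig's 1931 paper, so there is no in-paper argument to compare against. Your proof is the standard alternating-path proof (essentially the Gallai/Egerv\'ary construction of a minimum cover from a maximum matching), and it is correct: weak duality is handled properly, the set $Z$ of vertices reachable from unmatched $A$-vertices by alternating paths starting with a non-matching edge is used to define $C = (A \setminus Z) \cup (B \cap Z)$, and both the covering property and the injection of $C$ into $M$ via matched partners are verified carefully. One small point worth making explicit in step (i): when you extend the alternating path to $a \in A \cap Z$ by the edge $ab$, you should note that $ab$ cannot itself be a matching edge, since the alternating path to $a$ (if nontrivial) already ends in $a$'s unique matching edge, whose other endpoint would then be $b$ and hence in $Z$; and if $a \in U$ then $a$ is unmatched so $ab \notin M$ automatically. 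With that observation the extension is always by a non-matching edge as required, and the argument is complete.
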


\section{The connection to hypergraph covering}\label{sec:connection}


Given a graph $G,$ an $r$-edge-colouring $c,$ and a subset of vertices $W,$ we define the auxiliary $r$-partite $r$-graph $H = H(G, W, c)$ and two functions: $\ed = \ed(G, W, c),$ which maps vertices of $G$ to \emph{edges} of $H$; and $\vt = \vt(G, W, c)$ which maps monochromatic components of $G$ under $c$ to \emph{vertices} of $H.$ Formally,
\begin{align*}
  \ed &\colon V(G) \rightarrow E(H),\\
  \vt &\colon \big\{ C \; \vert \; C \text{ is a monochromatic component of } G \text{ under } c \big\} \rightarrow V(H).
\end{align*}
Each part of $H$ corresponds to one of the $r$ colours. For every monochromatic component $C$ (possibly consisting of a single vertex) in colour $i, \, 1 \leq i \leq r,$ such that $C \cap W \neq \emptyset,$ $H$ contains a vertex $\vt(C)$ in part $i.$ Additionally, in each part $i, 1 \leq i \leq r,$ $H$ contains a special vertex $v_i^*$ which corresponds to all other monochromatic components in colour $i$ (those which do not contain any vertex in $W$). In other words, if $C \cap W = \emptyset,$ then $\vt(C) = v_i^*.$
For each vertex $u \in V(G),$ we put an $r$-edge $\ed(u)$ defined as 
\[ \ed(u) = \{ \vt(C_i) \; \vert \; C_i \text{ is the monochromatic component in colour } i \text{ containing } u\}. \]
Note that the function $\ed$ is surjective ($H$ does not include redundant edges), but not necessarily injective.

We use shorthands $H(G, c) = H(G, V(G), c), \, \ed(G, c) = \ed(G, V(G), c), \, \vt(G, c) = \vt(G, V(G), c).$

\begin{figure}
    \centering
    \begin{tikzpicture}[line width=2pt]
    \defPt{-10}{0}{x1}
    \defPt{-10}{1.5}{x2}
    \defPt{-10}{3}{x3}
    \defPt{-10}{4.5}{x4}
    
    \defPt{-8}{0}{y1}
    \defPt{-8}{1.5}{y2}
    \defPt{-8}{3}{y3}
    \defPt{-8}{4.5}{y4}
    
    \defPt{-2}{1}{r1}
    \defPt{-3.2}{1.55}{bstar}
    \defPt{-1.75}{-0.2}{gdash}
    
    \defPt{-0}{4.464}{b1}
    \defPt{-1.1}{5}{rdash}
    \defPt{1.1}{5}{gstar}
    
    \defPt{2}{1}{g1}
    \defPt{3.2}{1.55}{bdash}
    \defPt{1.75}{-0.2}{rstar}
    
    \defPt{-2}{3.2}{g2}
    \defPt{0}{0.1}{b2}
    \defPt{2}{3.2}{r2}
    
    \draw[red] (x4) -- (y1);
    \draw[red] (x4) -- (y2);
    \draw[red] (x4) -- (y3);
    
    \draw[blue] (x3) -- (y4);
    \draw[blue] (x3) -- (y2);
    \draw[blue] (x3) -- (y1);
    
    \draw[green] (x2) -- (y4);
    \draw[green] (x2) -- (y3);
    \draw[green] (x2) -- (y1);
    
    \draw[red] (x1) -- (y4);
    \draw[blue] (x1) -- (y3);
    \draw[green] (x1) -- (y2);
    
    \draw[blue] (y1) -- (y2); 
    \draw[red] (y2) -- (y3); 
    \draw[green] (y4) -- (y3); 
    \draw[blue] (y2)  to [out=60,in=-60] (y4); 
    \draw[red] (y1)  to [out=60,in=-60] (y3); 
    \draw[green] (y1)  to [out=50,in=-50] (y4); 
    
    \node at ($(x1)-(2,0)$) {$\ed(x_1)=\textcolor{red}{r_2}\textcolor{blue}{b_2}\textcolor{green}{g_2}$};
    
    \node at ($(x2)-(2,0)$) {$\ed(x_2)=\textcolor{red}{r^*}\textcolor{blue}{b'}\textcolor{green}{g_1}$};
    
    \node at ($(x3)-(2,0)$) {$\ed(x_3)=\textcolor{red}{r'}\textcolor{blue}{b_1}\textcolor{green}{g^*}$};
    
    \node at ($(x4)-(2,0)$) {$\ed(x_4)=\textcolor{red}{r_1}\textcolor{blue}{b^*}\textcolor{green}{g'}$};
    
    \node at ($(y1)+(2.5,0)$) {$\textcolor{red}{r_1}\textcolor{blue}{b_1}\textcolor{green}{g_1}=\ed(y_1)$};
    
    \node at ($(y2)+(2.5,0)$) {$\textcolor{red}{r_1}\textcolor{blue}{b_1}\textcolor{green}{g_2}=\ed(y_2)$};
    
    \node at ($(y3)+(2.5,0)$) {$\textcolor{red}{r_1}\textcolor{blue}{b_2}\textcolor{green}{g_1}=\ed(y_3)$};
    
    \node at ($(y4)+(2.5,0)$) {$\textcolor{red}{r_2}\textcolor{blue}{b_1}\textcolor{green}{g_1}=\ed(y_4)$};
    
    \foreach \i in {1,...,4}
    {
        \draw[line width=0.5pt]  (x\i) \vx;
        \node at (x\i) {\small{$x_{\i}$}};
        \draw[line width=0.5pt]  (y\i) \vx;
        \node at (y\i) {\small{$y_{\i}$}};
    }
    
    \foreach \i in {r1,b1,g1,r2,b2,g2,bstar,gdash,gstar,rdash,rstar,bdash}
    {
        \draw (\i) \vx;        
    }
    \node at (r1) {\small{$\textcolor{red}{r_1}$}};
    \node at (r2) {\small{$\textcolor{red}{r_2}$}};
    \node at (b1) {\small{$\textcolor{blue}{b_1}$}};
    \node at (b2) {\small{$\textcolor{blue}{b_2}$}};
    \node at (g1) {\small{$\textcolor{green}{g_1}$}};
    \node at (g2) {\small{$\textcolor{green}{g_2}$}};
    \node at (rdash) {\small{$\textcolor{red}{r'}$}};
    \node at (rstar) {\small{$\textcolor{red}{r^*}$}};
    \node at (bdash) {\small{$\textcolor{blue}{b'}$}};
    \node at (bstar) {\small{$\textcolor{blue}{b^*}$}};
    \node at (gdash) {\small{$\textcolor{green}{g'}$}};
    \node at (gstar) {\small{$\textcolor{green}{g^*}$}};
    
    \draw[rounded corners=5mm, fill=black!10!] (r1) -- (b1) -- (g1) -- cycle;
    \draw[rounded corners=5mm, fill=black!10!] ($(r1)+(-0.15,-0.15)$) -- (bstar) -- (gdash) -- cycle;
    \draw[rounded corners=5mm, fill=black!10!] ($(b1)+(0,0.15)$) -- (gstar) -- (rdash) -- cycle;
    \draw[rounded corners=5mm, fill=black!10!] ($(g1)+(0.15,-0.15)$) -- (rstar) -- (bdash) -- cycle;
    
    \draw[rounded corners=5mm, fill=black!10!] ($(r1)+(-0.15,0)$) -- ($(b1)+(-0.1,0.1)$) -- ($(g2)+(0.1,-0.05)$) -- cycle;
    \draw[rounded corners=5mm, fill=black!10!] ($(g1)+(0.15,0)$) -- ($(b1)+(0.1,0.1)$) -- ($(r2)+(-0.1,-0.05)$) -- cycle;
    \draw[rounded corners=5mm, fill=black!10!] ($(r1)+(-0.05,-0.15)$) -- ($(b2)+(0,0.15)$) -- ($(g1)+(0.05,-0.15)$) -- cycle;
    \draw[rounded corners=5mm, fill=black!20!, fill opacity=0.5] (r2) -- (b2) -- (g2) -- cycle;    
        
    \end{tikzpicture}
    \caption{An example of a coloured graph and its auxiliary hypergraph $H(G,c)$. Here $\textcolor{red}{r_1},\textcolor{red}{r_2},\textcolor{blue}{b_1},\textcolor{blue}{b_2},\textcolor{green}{g_1},\textcolor{green}{g_2}$ denote non-trivial monochromatic components, $\textcolor{red}{r^*},\textcolor{blue}{b^*},\textcolor{green}{g^*},\textcolor{red}{r'},\textcolor{blue}{b'},\textcolor{green}{g'}$ denote components consisting of a single isolated vertex. When discussing $H(G,c)$, we think of its vertex set as simply the set of monochromatic components and the function $\vt$ is the identity function allowing us to highlight which setting we are working in. In case of $H(G,W,c)$ it serves another purpose, namely mapping any connected component disjoint from $W$ to our artificial collapsed component $v_i^*$. For example if in the above example $W=\{y_1,y_2,y_3,y_4\}$ then components $\textcolor{red}{r^*},\textcolor{red}{r'}$ collapse to the same component as do $\textcolor{blue}{b^*},\textcolor{blue}{b'}$ and $\textcolor{green}{g^*},\textcolor{green}{g'}$.}
    \label{fig:auxilliary}
\end{figure}

The definition of $H(G, c)$ is behind the connection established in \cite{bucic2019covering}. The introduction of $W$, which one should think of as the set of ``relevant'' vertices is one of the key new ingredients which allow us to establish our new connection result. It captures in some sense the idea that everything ``interesting'' with $H(G,c)$ actually happens because of a small, constant number of edges. In the language of random graphs this means that a relatively small set of vertices prevents one from covering the whole vertex set with few components. 

Let us give some intuition behind this definition. First consider $(H, \ed, \vt) = (H, \ed, \vt)(G, c).$ Every vertex $v \in G$ maps to an edge $\ed(v)$ containing the vertices which correspond to the $r$ monochromatic components (including singleton components) which contain $v.$ Consider some monochromatic component $C$ of $G$ and the corresponding vertex $\vt(C)$ in $H.$ The edges which contain $\vt(C)$ correspond to vertices of $G$ in $C.$ Hence, it is easy to see that covering $G$ with monochromatic components is equivalent to covering the edges of $H$ with vertices.

Now, consider some $W \subseteq V$ and $(H, \ed, \vt) = (H, \ed, \vt)(G, W, c).$ We think of $H$ in the following way. The monochromatic components of $G$ which intersect $W$ are left as they are and each such monochromatic component $C$ is mapped to a unique vertex $\vt(C).$ For $i \in [r],$ the monochromatic components in colour $i$ which do not intersect $W$ are thought of as a single connected component, so they are mapped to the same vertex $v_i^*.$ Therefore, $H(G, W, c)$ is obtained from $H(G, c)$ by conflating some vertices, so it is easier to cover. However, we will choose a set $W$ which still requires many monochromatic components to cover. As the monochromatic components intersecting $W$ are left intact when building $H(G, W, c),$ it will also require many vertices to be covered. \Cref{thm:bollobas} will allow us to choose $W$ to be a small set (of constant size) while still having $\tau(H(G, W, c))$ large.

The following lemma formalises these ideas and collects the properties of $H(G,W,c)$ we are going to use throughout. The first part of the lemma already appears in \cite{bucic2019covering}.
\begin{lemma} \label{lemma:connection}
   Given an $r$-edge-colouring $c$ of $G,$ the following statements hold.
    \begin{enumerate}[label=\alph*)]
      \item \label{whole_graph} The number of components monochromatic in $c$ needed to cover $G$ equals $\tau\left(H(G, c)\right),$ namely $\tc_r(G) \geq \tau(H(G, c)).$
      \item \label{subgraphs_smaller_cover} For any $W \subseteq V(G),$ we have $\tau(H(G, W, c)) \leq \tau(H(G, c)).$
      \item \label{small_subgraph} If $\tau(H(G, c)) \geq s,$ there exists $W \subseteq V(G)$ of size at most $N_{r,s} = \binom{r-1 + s}{r}$ such that $\tau(H(G, W, c)) \geq s.$
      \item \label{neighbours_intersecting} Let $W \subseteq V(G)$ be arbitrary. If $uv \in E(G),$ then, for $\ed = \ed(G, W, c),$ the edges $\ed(u)$ and $\ed(v)$ intersect.
      \item \label{large_subset} Let $W, A \subseteq V(G)$ be arbitrary and consider $H = H(G, W, c), \ed = \ed(G, W, c).$ Then there exists an $f \in E(H)$ such that the number of $u \in A$ for which $\ed(u)=f$ is at least $\frac{|A|}{(|W| + 1)^r}.$ 
   \end{enumerate}
\end{lemma}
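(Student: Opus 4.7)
The plan is to tackle the five items in order, each by unpacking the basic dictionary between $G$-components and $H$-vertices, $G$-vertices and $H$-edges.

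For (a), I will observe that selecting monochromatic components $C_1,\ldots,C_m$ covering $V(G)$ is equivalent to selecting the vertices $\vt(C_1),\ldots,\vt(C_m)$ in $H(G,c)$: the vertex $u \in V(G)$ is covered by some $C_j$ precisely when the edge $\ed(u) \in E(H)$ contains $\vt(C_j)$. Since $\vt$ and $\ed$ are surjective this yields a correspondence between minimum covers on both sides, giving $\tc_r(G) = \tau(H(G,c))$. For (b), I will view $H(G,W,c)$ as the quotient of $H(G,c)$ obtained by collapsing, in each colour $i$, all vertices $\vt(C)$ with $C \cap W = \emptyset$ into the single vertex $v_i^*$. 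Quotienting a hypergraph only decreases its cover number, since any cover of $H(G,c)$ projects to a cover of $H(G,W,c)$ of no larger size.

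Part (c) is where \Cref{thm:bollobas} enters and is the only place where something has to be checked carefully. From $\tau(H(G,c)) \geq s$ the theorem produces a subhypergraph $H' \subseteq H(G,c)$ with $\tau(H') \geq s$ and at most $\binom{r-1+s}{r}$ edges. I will form $W$ by picking, for every edge of $H'$, one preimage under $\ed$, so that $|W| \leq \binom{r-1+s}{r}$. The key verification is that $H' \subseteq H(G,W,c)$: every vertex of $H'$ is of the form $\vt(C)$ for a monochromatic component $C$ that contains some chosen vertex of $W$ (the preimage of the corresponding edge of $H'$), hence $C \cap W \neq \emptyset$ and $\vt(C)$ is not collapsed to any $v_i^*$ when we pass to $H(G,W,c)$. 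This gives $\tau(H(G,W,c)) \geq \tau(H') \geq s$.

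Parts (d) and (e) are essentially immediate from the definitions. For (d), if $uv$ is coloured $i$ then $u$ and $v$ lie in the same monochromatic colour-$i$ component $C$, so $\vt(C) \in \ed(u) \cap \ed(v)$, regardless of what happens in the other colours. For (e), I will bound $|E(H)|$: in part $i$ the only vertices are the $\vt(C)$ for monochromatic components $C$ of colour $i$ intersecting $W$, of which there are at most $|W|$ (as such components are vertex-disjoint), together with the special vertex $v_i^*$. Thus each part has at most $|W|+1$ vertices and $|E(H)| \leq (|W|+1)^r$. Applying pigeonhole to the map $\ed|_A \colon A \to E(H)$ produces some $f \in E(H)$ with $|\ed^{-1}(f) \cap A| \geq |A|/(|W|+1)^r$, as required.
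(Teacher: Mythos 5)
Your proof is correct and follows essentially the same route as the paper's, with the quotient viewpoint for (b) and the Bollob\'as-based choice of $W$ in (c) being faithful reformulations of the paper's argument. One small inaccuracy in part (a): $\vt$ is not actually surjective when $W = V(G)$, since no monochromatic component is mapped to $v_i^*$ in that case, so these special vertices lie outside the image; the correct reason a minimum cover of $H(G,c)$ can be assumed to consist entirely of vertices of the form $\vt(C)$ is that each $v_i^*$ is isolated in $H(G,c)$ and hence never needed in a cover, which is the observation the paper makes explicitly.
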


\begin{proof}
  We prove these claims in order:
  \begin{enumerate}[label={\alph*})]
    \item Let $C_1, \dots, C_q$ be monochromatic components covering $V(G).$ Then, the vertices $\vt(C_1), \dots, \vt(C_q)$ form a cover of $H(G, c).$ Indeed, suppose that for some $u \in V(G),$ the edge $\ed(u)$ is not covered. Then, $u$ does not belong to any of the monochromatic components $C_1, \dots, C_q,$ a contradiction. Hence, $\tau\left(H(G, c)\right) \leq \tc_r(G).$

      On the other hand, suppose $H(G, c)$ can be covered using $t$ vertices. Observe that the vertices $v_i^*, 1 \leq i \leq r$ are isolated in $H(G, c)$ so we may suppose that $H$ is covered by vertices $\vt(C_1), \dots, \vt(C_t)$ for some monochromatic components $C_1, \dots, C_t$ of $G.$ We show these monochromatic components cover $V(G).$ Suppose some vertex $u \in V(G)$ is not covered. Then, the corresponding edge $\ed(u)$ does not contain any of the vertices $\vt(C_1), \dots, \vt(C_t),$ contradicting our assumption.
    \item Let $W \subseteq V(G)$ be arbitrary. Denote $H = H(G, c), \ed = \ed(G, c), \vt = \vt(G, c)$ and $H_W = H(G, W, c), \ed_W = \ed(G, W, c), \vt_{W}= \vt(G,W,c).$ Suppose $\tau(H) = s$ and consider a minimum cover $X = \{u_1, \dots, u_s\}$ of $H.$ Recall that in $H,$ the vertices $v_i^*, \, i \in [r],$ are isolated, so are not in $X$. Recall that for a monochromatic component $C$ of colour $i$, either $\vt_W(C)=\vt(C)$ (in case $C \cap W \neq \emptyset)$ or $\vt_W(C)=v_i^{*}$. One might wish to think of this as saying that we obtain $H_W$ from $H$ by ``collapsing'' a number of vertices into the $v_i^*$ of their part. 
    Let us now define the modified cover vertices $u_1',\ldots, u_s'$ as follows. Given $u_i$ let $C$ be the monochromatic component corresponding to it, so $\vt(C)=u_i$ and suppose it is of colour $j$. We then set $u_i'=u_i,$ if $C \cap W \neq \emptyset$ (so in case $u_i$ has not collapsed into $v_j^{*}$) and we set $u_i'=v_j^*$, otherwise (i.e.\ if $u_i$ collapsed into $v_j^{*}$). It is not hard to see that if $u_i \in \ed(v),$ then $u_i'\in \ed_W(v)$ so $\{u_1', \dots, u_s'\}$ makes a cover of $H_W$ of size at most $s = \tau(H)$, as desired. 
    
    \item By Theorem \ref{thm:bollobas}, in $H(G, c)$ there exists a set of edges $F$ of size at most $\binom{r-1 + s}{r}$ which cannot be covered by $s - 1$ vertices. Let $W$ be a set of vertices of $G$ obtained by choosing one element of $\ed^{-1}(\{f\})$ for each $f \in F$. Since, $F \subseteq H(G, W, c),$ it follows that $\tau(H(G, W, c)) \geq s.$
    
    \item Let $W \subseteq V(G)$ be arbitrary, let $\ed_W = \ed(G, W, c),$ and suppose $uv \in E(G).$ If $uv$ was coloured $j$ under $c,$ then $u, v$ lie in the same monochromatic component in colour $j.$ It follows that $\ed_W(u)$ and $\ed_W(v)$ contain the same vertex in part $j$.
    \item Observe that, by definition, $H(G, W, c)$ has at most $|W| + 1$ vertices in each of its parts. Therefore, the total number of different edges in $H$ is at most $(|W| + 1)^r.$ Taking the edge $f$ which appears most frequently among $\ed(u), u \in A,$ the claim follows.
  \end{enumerate}
  \vspace{-0.85cm}
\end{proof}

In order to prove lower bounds on $\tc_r(G)$ with $G \sim \Gnp,$ we find a fixed $r$-partite $r$-graph $H_0$ with large cover number $\tau(H_0)$ and show that, for an appropriate value of $p,$ w.h.p. we can find an $r$-edge-coloring $c$ of $G$ such that $H(G, c) \supseteq H_0.$ The following lemma will be a useful tool for obtaining such a colouring.
\begin{lemma} \label{lemma:colouring_lb}
  Let $G$ be a graph and let $H_0$ be an $r$-partite $r$-graph. Suppose we are given a function $\ed_0 \colon V(G) \rightarrow E(H_0)$ such that for any edge $uv \in E(G),$ $\ed_0(u) \cap \ed_0(v) \neq \emptyset.$ If $\ed_0$ is surjective, then $\tc_r(G) \geq \tau(H_0).$
\end{lemma}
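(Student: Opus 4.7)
The plan is to produce a single explicit $r$-edge-colouring $c$ of $G$ under which even the minimum number of monochromatic components needed to cover $V(G)$ is already at least $\tau(H_0)$. Since $\tc_r(G)$ is the worst case over colourings, this yields the desired inequality.

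To construct $c$, I will use the hypothesis directly: for each edge $uv \in E(G)$ the intersection $\ed_0(u) \cap \ed_0(v)$ is nonempty, so I can pick some part index $j \in [r]$ with $\ed_0(u)^j = \ed_0(v)^j$ and colour $uv$ with $j$ (breaking ties arbitrarily). The crucial feature of this colouring is that along any path in $G$ all of whose edges have colour $j$, the $j$-th coordinate of $\ed_0(\cdot)$ is constant. Consequently, to each monochromatic component $C$ of colour $j$ I can associate a well-defined vertex $\vt_0(C) := \ed_0(u)^j \in V(H_0)$ by picking any $u \in C$. This is where the real content of the hypothesis gets used; the rest is bookkeeping.

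Now suppose $C_1, \dots, C_t$ are monochromatic components of colours $j_1, \dots, j_t$ covering $V(G)$. I claim $\{\vt_0(C_1), \dots, \vt_0(C_t)\}$ is a vertex cover of $H_0$. Indeed, given any edge $f \in E(H_0)$, surjectivity of $\ed_0$ yields some $u \in V(G)$ with $\ed_0(u) = f$; since the $C_i$ cover $V(G)$, we have $u \in C_i$ for some $i$, and then $\vt_0(C_i) = \ed_0(u)^{j_i} = f^{j_i}$, a vertex of $f$. Hence $t \geq \tau(H_0)$, and therefore $\tc_r(G) \geq \tau(H_0)$.

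There is no real obstacle here; the argument is essentially an unpacking of the definitions once the colouring is chosen. The only points that require a sliver of care are: (i) checking well-definedness of $\vt_0(C)$, which is immediate by induction along any path in $C$ together with the colouring rule, with singleton components handled trivially since any vertex may be viewed as its own monochromatic component of the appropriate colour; and (ii) noting that surjectivity of $\ed_0$ is used exactly once, at the end, to ensure every edge of $H_0$ actually arises as $\ed_0(u)$ for some $u \in V(G)$ and is therefore covered.
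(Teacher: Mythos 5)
Your proof is correct and uses essentially the same idea as the paper: the same colouring rule, the same observation that $\ed_0(\cdot)^j$ is constant on each colour-$j$ monochromatic component, and the same use of surjectivity to conclude. The only difference is cosmetic — the paper routes the argument through the auxiliary hypergraph $H(G,c)$ and Lemma~\ref{lemma:connection}\ref{whole_graph} (proving $\tau(H(G,c)) \geq \tau(H_0)$), whereas you map monochromatic components directly to vertices of $H_0$, which makes the proof self-contained but is mathematically equivalent.
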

\begin{proof}
  First we describe the colouring $c$ such that $G$ cannot be covered by fewer than $\tau(H_0)$ monochromatic components. Consider an arbitrary edge $uv \in E(G).$ By assumption, $\ed_0(u) \cap \ed_0(v) \neq \emptyset$ and let $j \in [r]$ be an arbitrary index such that $\ed_0(u)^j = \ed_0(v)^j,$ where we remind the reader that we use $e^j$ to denote the vertex of $e$ in part $j.$ Then set $c(uv) = j.$

  Now, consider $(H, \ed, \vt) = (H, \ed, \vt)(G, c).$ We claim that for any $u, v \in V(G),$ and any $j \in [r]$:
  \begin{equation} \label{refined_comps}
    \ed(u)^j = \ed(v)^j \implies \ed_0(u)^j = \ed_0(v)^j.
  \end{equation}
  Suppose this is not the case and let $u, v \in V(G), \, j \in [r]$ be such that $\ed(u)^j = \ed(v)^j,$ but $\ed_0(u)^j \neq \ed_0(v)^j.$ Because $\ed(u)^j = \ed(v)^j,$ there exists a monochromatic path $(u = w_1, w_2, \dots, w_q = v)$ with all edges in colour $j.$ Since $\ed_0(u)^j \neq \ed_0(v)^j,$ there is an index $1 \leq i \leq q-1$ such that $\ed_0(w_i)^j \neq \ed_0(w_{i+1})^j.$ However, by construction, $c(w_i, w_{i+1}) \neq j,$ a contradiction.

  Finally, we show that $\tau(H) \geq \tau(H_0),$ which together with  Lemma~\ref{lemma:connection}~part~\ref{whole_graph} implies $\tc_r(G) \geq \tau(H) \geq \tau(H_0),$ as desired. Let $X = \{x_1, \dots, x_s\}$ be a cover of $H.$ By definition, for each $i \in [s],$ we can fix some $v_i \in V(G)$ and $j_i \in [r],$ such that $x_i = \ed(v_i)^{j_i}.$ Now, let $y_i = \ed_0(v_i)^{j_i}$ for $i \in [s].$ We show that $Y = \{y_1, \dots, y_s\}$ is a cover of $H_0.$ Consider an arbitrary vertex $u \in V(G).$ Since $X$ is a cover of $H,$ there exists some $i \in [s]$ such that $\ed(u)^{j_i} = x_i = \ed(v_i)^{j_i}.$ By \eqref{refined_comps}, we have $\ed_0(u)^{j_i} = \ed_0(v_i)^{j_i} = y_i,$ so $\ed_0(u)$ is covered by $Y.$ Because $\ed_0$ is surjective, it follows that $Y$ is a cover of $H_0,$ thus proving $\tau(H) \geq \tau(H_0).$
\end{proof}

The previous two lemmas suggest that in order to study $\tc_r(G),$ it is crucial to understand the properties $H = H(G, c)$ must have w.h.p. for any $r$-edge-colouring $c.$ Suppose we want to find an upper bound on the threshold for the property $\tc_r(G) \leq t,$ for some fixed values $r, t.$ Then, Lemma~\ref{lemma:connection}~part~\ref{small_subgraph} essentially allows us to assume that $H(G, c)$ is of bounded size.
If $p \gg \left(\frac{\log n}{n}\right)^{1/k},$ then w.h.p. every $k$ vertices in $G \sim \Gnp$ have a common neighbour. This implies that every $k$ edges in $H$ have a transversal cover which is also an edge of $H.$ Buci\'{c}, Kor\'{a}ndi and Sudakov \cite{bucic2019covering} observed that this property imposes a lot of structure on $H$ and can be used to bound its cover number and hence also the tree cover number of $G.$ 

We further refine this argument in the following way. If $p \gg \left(\frac{\log n}{n}\right)^{1/k},$ not only do every $k$ vertices in $G$ have a common neighbour, they have a common neighbourhood of size roughly $np^k.$ Since we may assume that $H$ has bounded size, for any $k$-tuple $T$ of vertices in $G,$ we can obtain a subset $U(T)$ of size $\Omega(np^k)$ of the common neighbourhood of $T,$ with all vertices $v \in U(T)$ having the same $\ed(v),$ using \Cref{lemma:connection}~part~\ref{small_subgraph}. If $p$ is large enough, w.h.p. there is an edge $vu$ between $U(T)$ and $U(T')$ for any two $k$-tuples of vertices, $T, T'.$ This implies, by \Cref{lemma:connection}~part~\ref{neighbours_intersecting}, that $\ed(v)$ and $\ed(u)$ intersect. Since all vertices $w \in U(T)$ have the same value of $\ed(w),$ we can set $\phi(T):=\ed(v)$, which by the fact $v$ was a common neighbour of vertices in $T$ and \Cref{lemma:connection}~part~\ref{neighbours_intersecting} implies $\phi(T)$ is a transversal cover of the $k$ edges in $H(G,c)$ corresponding to $T$. Since $T$ was arbitrary this collection of edges in $H$ is also arbitrary and we have shown that $\phi(T) \cap \phi (T') \neq \emptyset$ for any $T,T'$, so any two of these covers need to intersect. In particular, this shows that $H(G,c)$ must satisfy the intersecting $k$-cover property we defined in the introduction and which we repeat here for convenience.

\defnintkcoversrestate*

For positive integers $r \leq k,$ we define $\hi_r(k)$ to be the maximum possible cover number of an $r$-partite $r$-graph with intersecting $k$-covers. Observe that $\hi_r(k)$ is finite, indeed if an $r$-partite $r$-graph has intersecting $k$-covers, it cannot have a matching of size $2r$, so its cover number is at most $r(2r-1).$ 

The above discussion leads us to the following result.

\begin{lemma} \label{lemma:upper_bound_simple}
  Let $2 \leq r \leq k$ be fixed. There exists a constant $C = C(r, k) > 0$ such that the random graph $G \sim \calG(n, p),$ with $p > C \left(\frac{\log n}{n}\right)^{1 / (k+1)},$ satisfies w.h.p. $\tc_r(G) \leq \hi_r(k)$.
\end{lemma}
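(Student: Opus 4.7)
The plan is to contrapose: fix an arbitrary $r$-edge-colouring $c$ of $G$ and assume $\tc_r(G) > t := \hi_r(k)$, hoping to derive (w.h.p.) a contradiction by exhibiting a hypergraph with intersecting $k$-covers and cover number exceeding $\hi_r(k)$. First I would isolate the two random-graph properties that have to hold simultaneously for every possible colouring: applying \Cref{lemma:common_neigh_k_1} with $k$ replaced by $k+1$ gives that any $k$ vertices of $G$ have a common neighbourhood of size at least $\frac{C}{2}\cdot\frac{\log n}{p}$; and \Cref{lemma:no_large_empty_bipartite} gives that any two disjoint vertex sets of size at least $\frac{10\log n}{p}$ are joined by an edge. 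Both hold w.h.p.\ for $p>C(\log n/n)^{1/(k+1)}$ with $C=C(r,k)$ large.

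Next, by \Cref{lemma:connection}~\ref{whole_graph} the assumption $\tc_r(G)>t$ translates to $\tau(H(G,c))\geq t+1$, and by \Cref{lemma:connection}~\ref{small_subgraph} we can pick a set $W\subseteq V(G)$ with $|W|\le N_{r,t+1}=\binom{r+t}{r}$ such that $H:=H(G,W,c)$ still has $\tau(H)\geq t+1$. The key point is that $|W|$ and hence the number $(|W|+1)^r$ of possible edge-labels of $H$ depend only on $r$ and $k$.

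The core of the argument is to build a witness $\phi\colon E(H)^k\to E(H)$ for the intersecting $k$-covers property. Given any $k$-tuple $T=(e_1,\dots,e_k)$ of edges of $H$, I pick representatives $v_i\in \ed^{-1}(e_i)$ (using surjectivity of $\ed$), let $U(T)$ be their common neighbourhood in $G$, and apply \Cref{lemma:connection}~\ref{large_subset} to find a subset $U'(T)\subseteq U(T)$ of size at least $|U(T)|/(|W|+1)^r$ on which $\ed$ is constant, say equal to $f(T)\in E(H)$; I then set $\phi(T):=f(T)$. That $\phi(T)$ is a transversal cover of $T$ follows from \Cref{lemma:connection}~\ref{neighbours_intersecting} applied to any $u\in U'(T)$ and each $v_i$, since $u$ is adjacent to (or equal to) $v_i$. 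To verify $\phi(T)\cap\phi(T')\neq\emptyset$ for every pair $T,T'$: either $U'(T)\cap U'(T')\neq\emptyset$, giving $\phi(T)=\phi(T')$ immediately, or else the two sets are disjoint, in which case \Cref{lemma:no_large_empty_bipartite} supplies an edge $uv$ of $G$ with $u\in U'(T)$, $v\in U'(T')$ and \Cref{lemma:connection}~\ref{neighbours_intersecting} yields $\phi(T)\cap\phi(T')=\ed(u)\cap\ed(v)\neq\emptyset$.

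The only arithmetic obligation — and the step where the constant $C$ really has to be chosen — is ensuring that $|U'(T)|\geq\frac{10\log n}{p}$, so that \Cref{lemma:no_large_empty_bipartite} can be invoked. Since $|U(T)|\geq\frac{C\log n}{2p}$ and $(|W|+1)^r$ is bounded by a constant depending only on $r$ and $k$, taking $C=C(r,k)$ large enough handles this uniformly in $T$. Having produced $\phi$, the hypergraph $H$ has intersecting $k$-covers, so by definition $\tau(H)\leq\hi_r(k)=t$, contradicting $\tau(H)\geq t+1$. The main conceptual hurdle — the only one beyond bookkeeping — is the realisation that after restricting to the single $\ed$-fibre inside $U(T)$, the resulting set is still dense enough to hit any comparable set via the edges of $\Gnp$; this is precisely why the threshold shifts from $1/k$ to $1/(k+1)$.
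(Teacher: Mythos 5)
Your proposal is correct and follows the same route as the paper: reduce to a bounded-size auxiliary hypergraph $H(G,W,c)$ via Lemma~\ref{lemma:connection}~\ref{whole_graph} and~\ref{small_subgraph}, use the common-neighbourhood property to find a large $\ed$-constant set $U'(T)$ inside the common neighbourhood of representatives, and use Lemma~\ref{lemma:no_large_empty_bipartite} plus Lemma~\ref{lemma:connection}~\ref{neighbours_intersecting} to deduce the fixed covers pairwise intersect. The only cosmetic difference is that you explicitly fix a vertex representative $v_i\in\ed^{-1}(e_i)$ for each edge of the $k$-tuple, while the paper works directly with $k$-tuples of vertices of $G$; both yield the same well-defined $\phi$ on $E(H)^k$.
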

\begin{proof}
 Let $C$ be large enough and assume that $G$ satisfies the properties given by Lemmas \ref{lemma:no_large_empty_bipartite} and \ref{lemma:common_neigh_k_1}, namely there exists an edge between any two disjoint sets of at least $\frac{10 \log n}{p}$ vertices, and any $k$ vertices have a common neighbourhood of size at least $\frac{C}{2} \cdot \frac{\log n}{p}.$ Let $s = \hi_r(k).$ Suppose $\tc_r(G) \geq s+1$ and let $c$ be an $r$-edge-colouring such that $G$ cannot be covered by $s$ monochromatic components. By Lemma \ref{lemma:connection}~part~\ref{whole_graph}, $\tau(H(G, c)) \geq s+1$ and by Lemma~\ref{lemma:connection}~part~\ref{small_subgraph} there exists a set of vertices $W \subseteq V(G)$ such that $|W| \leq N_{r,s+1}$ and $\tau(H(G, W, c)) \geq s+1.$ We show that $H(G, W, c)$ has intersecting $k$-covers, thus contradicting the definition of $\hi_r(k).$

  Denote $(H, \ed, \vt) = (H, \ed, \vt)(G, W, c).$ Consider an arbitrary $k$-tuple $T = (v_1, \dots, v_k)$ of vertices in $G$ and let $A$ denote the set of their common neighbours. By assumption, $|A| \geq \frac{C}{2} \cdot \frac{\log n}{p}.$ Hence, by \Cref{lemma:connection}~part~\ref{large_subset}, there exists an edge $e \in E(H),$ such that for $U = U(T) = \ed^{-1}(e) \cap A,$ we have
\[ |U| = \left|\ed^{-1}(e) \cap A\right| \geq \frac{|A|}{(N_{r, s+1}+1)^r} \geq \frac{10 \log n}{p}, \]
where we took $C$ to be large enough compared to $r,k$.
By Lemma~\ref{lemma:connection}~part~\ref{neighbours_intersecting}, $e$ forms a cover of $\ed(v_1), \dots, \ed(v_k)$ and we set $\phi(v_1, \dots, v_k) = e.$ Finally, we need to show that these covers pairwise intersect. Consider two arbitrary $k$-tuples $T, T'$ of vertices in $G.$ If $U(T) \cap U(T') \neq \emptyset,$ then $\phi(T) = \phi(T')$ so we may assume that $U(T)$ and $U(T')$ are disjoint. Since $|U(T)|, |U(T')| \geq \frac{10 \log n}{p},$ by assumption there exists an edge between $U(T)$ and $U(T').$ By Lemma~\ref{lemma:connection}~part~\ref{neighbours_intersecting}, it follows that $\phi(T) \cap \phi(T') \neq \emptyset,$ finishing the proof.
\end{proof}

We now turn our attention to lower bounds for $\tc_r(G)$ in terms of $\hi_r(k)$. 

\begin{lemma} \label{lemma:lower_bound_simple}
  For any fixed $2 \leq r \leq k,$ there exists a constant $c = c(r, k)$ such that for the random graph $G \sim \calG(n, p)$ with $p < c \left( \frac{\log n}{n}\right)^{\frac{1}{k+1}},$ w.h.p. $tc_r(G) \geq \hi_r(k).$
\end{lemma}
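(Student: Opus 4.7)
The plan is to apply \Cref{lemma:colouring_lb} with $H_0$ chosen to realise the maximum in the definition of $\hi_r(k)$. Fix such an $H_0$, so that $H_0$ is $r$-partite, has intersecting $k$-covers and $\tau(H_0)=\hi_r(k)$; by \Cref{thm:bollobas} we may further assume $m:=|E(H_0)|$ is bounded in terms of $r$ and $k$. Let $\phi:E(H_0)^k\to E(H_0)$ denote an associated intersecting $k$-cover function. The task then reduces to producing a surjective $\ed_0:V(G)\to E(H_0)$ such that adjacent vertices of $G$ are mapped to edges of $H_0$ that share at least one vertex.

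The key structural input on the random graph comes from \Cref{lemma:random_for_lb}: taking $c=c(r,k)>0$ small enough and applying that lemma with $k+1$ in place of $k$ and with parameter $m$, we get w.h.p.\ an independent set $S\subseteq V(G)$ with $|S|=m$ in which no $k+1$ vertices share a common neighbour in $G$. In particular, every vertex of $G$ has at most $k$ neighbours in $S$.

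Now fix an arbitrary bijection $\sigma:S\to E(H_0)$. For $s\in S$ set $\ed_0(s):=\sigma(s)$; for $v\in V(G)\setminus S$ let $T(v)\in E(H_0)^k$ be any $k$-tuple that lists every element of $\{\sigma(u):u\in N(v)\cap S\}$ (padded with an arbitrary fixed edge if fewer than $k$ such $S$-neighbours exist) and set $\ed_0(v):=\phi(T(v))$. Surjectivity holds because $\ed_0\vert_S=\sigma$. To check the adjacency condition on an edge $uv\in E(G)$: no such edge lies inside $S$; if $u=s\in S$ and $v\notin S$, then $\sigma(s)\in T(v)$ and $\phi(T(v))$ is a transversal cover of $T(v)$, so $\ed_0(v)\cap\ed_0(s)=\phi(T(v))\cap\sigma(s)\neq\emptyset$; and if $u,v\notin S$, the intersecting $k$-cover property of $\phi$ directly gives $\ed_0(u)\cap\ed_0(v)=\phi(T(u))\cap\phi(T(v))\neq\emptyset$. \Cref{lemma:colouring_lb} then yields $\tc_r(G)\geq\tau(H_0)=\hi_r(k)$.

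The main subtlety is handling edges $uv$ with both endpoints outside $S$: a priori such an edge places no constraint on the $S$-neighbourhoods of $u$ or $v$, so no ``local'' information about $u$ and $v$ is available. This is precisely where the \emph{intersecting} strengthening over the plain $k$-cover property is essential: once each vertex is labelled by applying $\phi$ to its $S$-neighbours, the intersection of any two outputs of $\phi$ is free. The choice of $k+1$ rather than $k$ in \Cref{lemma:random_for_lb} is what ensures the $S$-degree bound of $k$ needed to feed $S$-neighbour labels into the arity-$k$ map $\phi$.
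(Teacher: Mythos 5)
Your proof is correct and follows essentially the same approach as the paper: take an extremal $H_0$ with intersecting $k$-covers, use \Cref{lemma:random_for_lb} with parameter $k+1$ to find an independent set $S$ of size $|E(H_0)|$ in which no $k+1$ vertices have a common neighbour, label $S$ bijectively by $E(H_0)$, extend to $V(G)\setminus S$ via $\phi$ applied to $S$-neighbour labels, and feed this into \Cref{lemma:colouring_lb}. The three adjacency cases (within $S$, $S$ to $V\setminus S$, within $V\setminus S$) are handled exactly as in the paper. One small remark: your appeal to Bollob\'as to bound $m=|E(H_0)|$ (needed because \Cref{lemma:random_for_lb} requires $m$ to be a constant) is actually a step the paper leaves implicit; the full justification requires not only passing to a small subhypergraph with the same cover number, but then re-adding the (pairwise intersecting) covers $\phi(T)$ as edges so that the intersecting $k$-cover property is preserved, as discussed in the remark following the lemma — worth spelling out, but your instinct is right and this does not affect correctness.
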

\begin{proof}
  Let $H$ be an $r$-partite $r$-graph with intersecting $k$-covers such that $\tau(H) = \hi_r(k) = s.$ For a $k$-tuple $T$ of edges in $H,$ let $\phi(T)$ denote the fixed $k$-cover of $T,$ given by the definition of the intersecting $k$-covers property. For small enough $c,$ by Lemma \ref{lemma:random_for_lb}, we may assume that in $G \sim \Gnp$ there exists an independent set $I$ of size $m = |E(H)|$ such that no $k+1$ vertices in $I$ have a common neighbour. Let $f_1, \dots, f_m$ denote the edges of $H$ and let $v_1, \dots, v_m$ denote the vertices in $I.$

  We aim to use Lemma \ref{lemma:colouring_lb}, so we need to describe a surjective function $\ed_0 \colon V(G) \rightarrow E(H)$ such that $\ed_0(u) \cap \ed_0(v) \neq \emptyset$ when $uv \in E(G).$ Set $\ed_0(v_i) = f_i,$ for $i \in [m],$ which ensures $\ed_0$ is surjective. Now consider $u \in V(G) \setminus I.$ By assumption $u$ has at most $k$ neighbours in $I,$ so let $T = T(u)$ be a $k$-tuple of vertices in $I$ containing all of its neighbours in $I.$ Set $\ed_0(u) = \phi(\ed_0(T)).$

    Let us verify that $\ed_0$ satisfies the conditions required by Lemma \ref{lemma:colouring_lb}. Trivially, $\ed_0$ is surjective as it is a bijection when restricted to $I\subseteq V(G).$ For any $v_i \in I, u \not\in I,$ such that $v_iu \in E(G),$ by construction, $\ed_0(u)$ and $f_i = \ed_0(v_i)$ intersect. By definition, for any $u_1, u_2 \in V(G) \setminus I,$ $\ed_0(u_1) = \phi(\ed_0(T(u_1)))$ and $\ed_0(u_2) = \phi(\ed_0(T(u_2)))$ intersect. Recall that $I$ is an independent set, so there are no more edges to consider. Therefore, $\ed_0$ satisfies the required conditions and the result follows by Lemma \ref{lemma:colouring_lb}.
\end{proof}

\begin{rem}
  This lemma tells us that in order to obtain a lower bound of $\tc_r(\Gnp) \geq s$ one needs to construct an $r$-partite $r$-uniform hypergraph $H$ with cover number at least $s$ and in which we can fix a cover for any $k$-tuple of edges in such a way that all these fixed covers intersect. Technically speaking, the definition of intersecting $k$-covers insists that our fixed $k$-covers all need to be edges of our hypergraph, but provided this is not the case we may simply consider a new hypergraph $H'$ consisting of $H$ together with all the fixed covers added to it. Now, given a $k$-tuple $T$ of edges of our new hypergraph, we can pick any $k$-tuple $T'$ containing all edges of $T$ which were already present in $H$ and let $\phi(T')$ be the fixed cover of $T$ in $H'.$ Because all the new edges pairwise intersect (all our fixed covers are assumed to pairwise intersect), $\phi(T')$ covers all the edges in $T.$
\end{rem}

Lemmas~\ref{lemma:upper_bound_simple}~and~\ref{lemma:lower_bound_simple} prove our connection theorem, \Cref{thm:upto1}. In other words, for fixed $r, t$ the threshold value of $p$ for having $\tc_r(G) \leq t$ is between $\left({\log n}/{n}\right)^{1/\kc_r(t)}$ and $\left({\log n}/{n}\right)^{1/(\kc_r(t)+1)},$ where $\kc_r(t) = \min \{k \in \mathbb{N} \; \vert \; \hi_r(k) \leq t\}.$

Now suppose for some fixed $r,t$ we are able to exactly determine $\kc_r(t) = k$ and we would like to know the precise threshold for having $\tc_r(G) \leq t.$ Let $p \gg \left({\log n}/{n}\right)^{1/k}.$ As argued before, we can assume that $H = H(G, c)$ is of bounded size and then for any set of $k$ vertices in $G,$ in their common neighbourhood we can find a set $U$ of $\Omega(np^k) \gg \log n$ vertices which all map to the same edge in $H.$ However, in order to guarantee an edge between any pair of such sets $U$ (needed to establish the intersecting $k$ covers property) we need $\Omega(np^k) \gg \frac{\log n}{p},$ which precisely means that when $p$ is in the range left open by \Cref{thm:upto1} we can not guarantee that the $k$-covers intersect. So if one insists on finding \emph{exact} thresholds one needs to work a bit harder. To this end, consider some $k-1$ vertices in $G$ and let $U'$ be the set of their common neighbours which have at least one edge towards our set $U.$ It is not difficult to show that w.h.p. $|U'| = \Omega(np^k \cdot |U|) \gg \log^2 n$ and that $U'$ can play a similar role as $U$ did in our previous arguments. Iterating this argument until we find big enough sets to allow us to deduce there must be an edge between any pair of them, motivates the following definition.

\begin{defn*}
  Let $k, m, r$ be positive integers. We say that an $r$-partite $r$-graph $H_0$ is \emph{$(k, m)$-coverable} if there exist $r$-partite $r$-graphs $H_0 \supseteq H_1 \supseteq \dots \supseteq H_m$ such that the following holds:
  \begin{enumerate}[label=P\arabic*)]
    \item \label{cover_in_Hm} For any $k-1$ edges in $E(H_0)$ there exists an edge $e \in E(H_m)$ which intersects each of these $k-1$ edges.
    \item \label{cover_in_next} For any edges $e_1, \dots, e_{k-1} \in E(H_0),$ and any edge $e_k \in H_i, \, 0 \leq i \leq m-1,$ there exists an edge $f \in E(H_{i+1})$ which intersects all edges $e_1, \dots, e_k.$
    \item \label{Hm_all_intersect} All edges in $E(H_m)$ pairwise intersect.
  \end{enumerate}
\end{defn*}

Let us try to further explain what is going on here. First observe that if we set $m=1$ we recover the $k$-cover intersecting property where $H_1$ is simply the subset of the edges which we fix as a cover for some $k$-edges. Note that if $H$ is $(k,m)$-coverable, then it is also $(k-1, m)$-coverable. Furthermore, $(k,m-1)$-coverability implies $(k,m)$-coverability, since if $H_0 \supseteq H_1 \supseteq \dots \supseteq H_{m-1}$ satisfy the properties for $(k,m-1)$ coverability, then $H_0 \supseteq H_1 \supseteq \dots \supseteq H_{m-1} = H_m$ establish $(k,m)$-coverability. So while as we discussed above we are not able to establish that for $p \gg \left(\frac{\log n}{n}\right)^{1/k}$ $\Gnp$ has intersecting $k$-covers (i.e. is $(k,1)$-coverable), we will be able to establish $(k,m)$-coverability for some larger $m$.

Let us show why in a further attempt to illustrate where the definition comes from. $H_0$ will be our auxiliary hypergraph $H(G,W,c)$ so its edges correspond to vertices of $G$ (to be exact, to monochromatic components defined by vertices of $G$). \ref{cover_in_Hm} corresponds to the fact that any $k-1$ vertices in $G$ have $\Omega(np^{k-1}) \gg \frac{\log n}{p}$ many common neighbours which map to the same edge of $H(G,W,c)$. All edges we encounter this way we place in $H_m$. A key idea here is that since the set of vertices in $G$ which all map to this edge is large we know that there needs to be an edge in $G$ between any pair of such sets, meaning that any pair of edges in $H_m$ need to intersect, in other words \ref{Hm_all_intersect} holds. So far we have just recovered our previous argument, however taking this perspective leads us to split the edges of $H_0$ more finely into ``levels'' depending on how many times we can guarantee they appear. $H_m$ consist of edges which appear the most times and $H_0$ takes all edges without any guarantee on how many times they appear. $H_i$ will consist of edges for which we can guarantee they appear at least about $(\log n)^i$ many times. For example, $H_1$ will arise by taking any set of $k$ vertices in $G$ (which correspond to $k$ edges in $H_0$) and then taking the subset $U$ of their common neighbourhood consisting of vertices which map to the same, most frequent, edge of $H_0$. We know $U$ has size at least $\Omega(np^k) \gg \log n$ so we can place all such edges in $H_1$. Now looking at common neighbours of arbitrary $k-1$ vertices of $G$ (so $k-1$ edges in $H_0$) and some vertex in $U$ (corresponding to an edge in $H_1$) we find a subset of them $U'$ of size $|U'| = \Omega(np^k \cdot |U|) \gg \log^2 n$ which all map to the same edge of $H_0$, so all these edges can be placed in $H_2$. Repeating this until the sets become large enough will establish \ref{cover_in_next}. We will establish this more formally as \Cref{lemma:ub_levels}, but let us first introduce some notation and a few ingredients which should also help further familiarize the reader with the $(k,m)$-coverability notion.

For positive integers $r \leq k$ and $m,$ we define $\hi_r(k, m)$ to be the largest possible cover number of a $(k,m)$-coverable $r$-partite $r$-graph. Since $H$ being $(k,1)$-coverable is equivalent to $H$ having intersecting $k$-covers we have $\hi_r(k, 1) = \hi_r(k).$ For $r \leq k,$ we define $\Hi_r(k) = \max_{m \in \mathbb{N}} \hi_r(k, m),$ so the largest possible cover number of an $r$-partite $r$-graph which is $(k,m)$-coverable for some $m$. Since, as we already observed, $(k,m-1)$ coverability implies $(k,m)$-coverability, we know $\hi_r(k,m-1) \le \hi_r(k,m)$ and in particular $\hi_r(k)=\hi_r(k,1)\le \Hi_r(k)$. Similarly since, as we already observed, $(k,m)$-coverability implies $(k-1,m)$-coverability, we know $\hi_r(k,m) \le \hi_r(k-1,m)$ and in particular $\Hi_r(k) \le \Hi_r(r)$, for any $k \ge r$. Finally, note that if an $r$-partite $r$-graph is $(k, m)$-coverable for some $m$, then it has intersecting $(k-1)$-covers, or equivalently it is $(k-1,1)$-coverable. This implies that $\hi_r(k-1) \ge \Hi_r(k).$ This together with our observation that $\hi_r(k)$ is bounded for any $k \ge r$ implies $\Hi_r(k)$ is bounded for any $k > r$. The following lemma shows this holds for $k=r$ as well and nicely illustrates how one can use the notion of $(k, m)$-coverability to bound the cover number of an $r$-partite $r$-graph.
\begin{lemma}\label{lem:bounded}
	For any $r \geq 2,\, \Hi_r(r) \leq r^2.$
\end{lemma}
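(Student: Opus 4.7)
The plan is to build an explicit vertex cover of $H_0$ of size at most $r^2$ directly from the chain $H_0 \supseteq H_1 \supseteq \dots \supseteq H_m$. First, I would pick any edge $f = \{v_1, \dots, v_r\} \in E(H_m)$; such an edge exists by applying property \ref{cover_in_Hm} to any $r - 1$ edges of $H_0$ (and if $H_0$ has fewer than $r - 1$ edges then $\tau(H_0) \le r - 2 < r^2$ trivially). The $r$ vertices of $f$ already cover every edge of $H_0$ that meets $f$, so the remaining task is to cover the edges of $H_0$ disjoint from $f$ using at most $r(r-1)$ additional vertices.

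For each edge $e$ of $H_0$ disjoint from $f$, the plan is to apply property \ref{cover_in_next} to $r - 1$ copies of $e$ together with $f \in H_m \subseteq H_{m-1}$, producing $g(e) \in H_m$ meeting both $e$ and $f$. Since $e \cap f = \emptyset$, the two intersection vertices $g(e) \cap f$ and $g(e) \cap e$ live in distinct parts: say $g(e) \cap f = \{v_{j(e)}\}$ with $j(e) \in [r]$, and $g(e) \cap e$ lies in part $i(e) \neq j(e)$. This sorts the edges of $H_0$ disjoint from $f$ into at most $r(r-1)$ classes $C_{i,j}$, indexed by ordered pairs $(i, j)$ with $i \neq j$.

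The key claim would be that each class $C_{i,j}$ is covered by a single vertex $u_{i,j}$ in part $i$ --- specifically, the vertex $e^i$ is the same for every $e \in C_{i,j}$. Granted this, the set $\{v_1, \dots, v_r\} \cup \{u_{i,j} : i \neq j\}$ has size at most $r + r(r-1) = r^2$ and covers $H_0$, finishing the proof. To prove the claim for two edges $e, e' \in C_{i,j}$, one observes that their witnesses $g(e), g(e') \in H_m$ already share $v_j$ at part $j$. For $r \geq 3$, one can apply property \ref{cover_in_Hm} to the $(r-1)$-tuple $(e, e', e, \dots, e)$ to obtain $h \in H_m$ meeting both $e$ and $e'$; since $h$ also meets $f$ by property \ref{Hm_all_intersect}, and $h$ has exactly $r$ vertices, one in each part, a careful case analysis on which parts of $h$ hit $e$, $e'$, and $f$ forces $h \cap e$ and $h \cap e'$ to live in part $i$, yielding $e^i = h^i = e'^i$.

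The main obstacle will be justifying the key claim by this case analysis. For the borderline case $r = 2$, property \ref{cover_in_Hm} cannot be applied directly to the pair $(e, e')$ because it only supplies covers for single edges of $H_0$; one must instead iterate property \ref{cover_in_next} along the chain $H_{m-1}, H_{m-2}, \dots, H_0$ and use that in the bipartite setting $H_m$ must be a star, to still force $e^i = e'^i$. I expect this case analysis --- especially the $r = 2$ verification --- to be the bulk of the technical work in the proof.
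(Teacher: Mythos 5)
Your approach is genuinely different from the paper's, but it has a serious gap at exactly the place you flag as ``the bulk of the technical work.'' The key claim --- that for $e,e' \in C_{i,j}$ one necessarily has $e^i = e'^i$ --- does not follow from the definitions, and the sketch you give for proving it does not close the gap. You propose to apply property~\ref{cover_in_Hm} to $(e,e',e,\dots,e)$ to get $h \in H_m$ meeting $e$, $e'$ and (by~\ref{Hm_all_intersect}) also $f$; but nothing constrains the \emph{parts} in which $h$ meets these three edges. Already for $r=3$ with $e,e',f$ pairwise disjoint, $h$ could meet them in three distinct parts (say $h^1 \in e$, $h^2 \in e'$, $h^3 \in f$), in which case the intersections with $e$ and $e'$ live in different parts and certainly $h^i$ cannot equal both $e^i$ and $e'^i$. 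More fundamentally, $g(e)$ and $g(e')$ both lie in $H_m$ and both contain $v_j$, which already certifies that they pairwise intersect, so property~\ref{Hm_all_intersect} places no further constraint forcing $g(e)^i = g(e')^i$. The assignment of $e$ to a class $(i(e),j(e))$ is also not canonical (an edge can have several valid witnesses and several valid intersection coordinates), and you do not specify a choice that would make the claim hold. As written, the argument does not establish the claim, and I do not believe the claim is true in this generality.

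The paper's proof is both shorter and structurally different: rather than exhibit an explicit $r^2$-vertex cover, it bounds the matching number. Take any $e \in E(H_m)$. If $e$ covers $H$ you are done; otherwise let $\ell < m$ be maximal such that some $f \in E(H_\ell)$ is disjoint from $e$. If $H$ had a matching of size $r+1$ containing $e$ and $f$, then by property~\ref{cover_in_next} applied to the $r$ edges of this matching other than $e$ (with $f$ playing the role of the edge in $H_\ell$), there would be a transversal cover $e' \in E(H_{\ell+1})$ of these $r$ pairwise disjoint edges; but such an $e'$ uses all $r$ of its vertices on that matching and is therefore disjoint from $e$, contradicting the maximality of $\ell$. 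Hence every maximal matching extending $\{e,f\}$ has size at most $r$, and its vertex set, of size at most $r^2$, is a cover. You might try to salvage your construction by first establishing this matching bound and then reading off a cover, but at that point you would simply be reproving the paper's argument; the explicit $C_{i,j}$ classification does not appear to lead anywhere on its own.
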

\begin{proof}
	Let $H$ be an $(r,m)$-coverable $r$-partite $r$-graph, for some $m.$ We need to show that $\tau(H) \leq r^2.$ Let $H = H_0 \supseteq H_1 \supseteq \dots \supseteq H_m$ certify that $H$ is $(r,m)$-coverable. Let $e$ be an arbitrary edge in $H_m.$ Assume that $e$ is not a cover of $H,$ as otherwise $\tau(H) \leq r,$ and define $0 \leq \ell < m$ to be the maximum index $i$ such that $H_i$ contains an edge disjoint from $e.$ Let $f \in H_\ell$ be such an edge. Suppose in $H$ there is a matching of size $r+1$ containing $e, f$ and consider the $r$ edges different from $e$ in this matching. By assumption, these $r$ edges have a transversal cover $e' \in H_{\ell + 1}.$ However, $e'$ covers a matching of size $r$ so it cannot intersect $e,$ a contradiction. Therefore, there exists a maximal matching of size at most $r$ in $H,$ implying $\tau(H) \leq r^2.$
\end{proof}

We note that this upper bound on $\Hi_r(r)$ is the key reason behind  why \Cref{prop:begin-range} holds, at least once we translate it to our ``hypergraph'' setting. Formalising our previous discussion we prove the following ``translation'' lemma which strengthens \Cref{lemma:upper_bound_simple} and which together with \Cref{lem:bounded} will prove \Cref{prop:begin-range}. 

\begin{lemma} \label{lemma:ub_levels}
  For any fixed integers $2 \leq r \leq k,$ there exists a constant $C = C(r, k)$ such that for $p > C \left(\frac{\log n}{n} \right)^{1/k},$ the random graph $G \sim \Gnp$ w.h.p. satisfies $\tc_r(G) \leq \Hi_r(k).$
\end{lemma}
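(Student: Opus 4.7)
I would argue by contradiction: suppose $\tc_r(G) > t := \Hi_r(k)$, so some $r$-edge-colouring $c$ satisfies $\tau(H(G,c)) \ge t+1$. By \Cref{lemma:connection}(a) and (c), we may pass to a set $W \subseteq V(G)$ of bounded size $|W| \le N_{r,t+1}$ for which $H_0 := H(G,W,c)$ still has $\tau(H_0) > \Hi_r(k)$. It then suffices, w.h.p., to produce a chain $H_0 \supseteq H_1 \supseteq \cdots \supseteq H_m$ (where $m$ is allowed to depend on $n$) certifying that $H_0$ is $(k,m)$-coverable: by definition this forces $\tau(H_0) \le \hi_r(k,m) \le \Hi_r(k)$, contradicting the choice of $c$. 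The key point is that even though $m$ may grow with $n$, the bound $\Hi_r(k) = \max_m \hi_r(k,m)$ does not.

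\textbf{Construction of the chain.} Following the informal discussion preceding the lemma, I would filter edges of $H_0$ by the size of their $\ed$-preimage. Write $C_W := (|W|+1)^r$ (a constant), pick $C = C(r,k)$ large compared to $C_W$, and set $\beta := C/(12 C_W) \ge 2$. Define $N_0 := 1$ and $N_i := \lceil \beta N_{i-1} \log n\rceil$ for $i \ge 1$, and let $m$ be the least integer with $N_m \ge 10 \log n / p$; since the $N_i$ grow geometrically by a factor $\log n$ and $1/p \le n^{1/k}$, one can take $m = O(\log n / \log \log n)$. Put
\[
H_i \;:=\; \{\, f \in E(H_0) : |\ed^{-1}(f)| \ge N_i \,\}, \qquad 0 \le i \le m.
\]
Since $\ed$ is surjective this gives a nested chain with $H_0$ equal to the full auxiliary hypergraph.

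\textbf{Verification of P1 and P3.} Condition on the w.h.p.\ events of \Cref{lemma:no_large_empty_bipartite,lemma:common_neigh_k_1,lemma:large_neigh} (valid provided $C$ is large). For P3, any two distinct edges of $H_m$ have disjoint preimages each of size $\ge 10\log n/p$, so \Cref{lemma:no_large_empty_bipartite} gives an edge of $G$ between them and \Cref{lemma:connection}(d) forces the two edges of $H_0$ to intersect. For P1, given $e_1,\dots,e_{k-1} \in E(H_0)$ I pick representatives $v_j \in \ed^{-1}(e_j)$; their common neighbourhood in $G$ has size $\ge (C/2)\log n/p$ by \Cref{lemma:common_neigh_k_1}, so the pigeonhole statement \Cref{lemma:connection}(e) yields an edge $f$ of $H_0$ whose preimage meets this common neighbourhood in a set of size $\ge 10\log n/p$ (provided $C \ge 20\,C_W$), whence $f \in H_m$, and \Cref{lemma:connection}(d) shows $f$ is a transversal cover of $e_1, \dots, e_{k-1}$.

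\textbf{The key step P2 and the main obstacle.} Given $e_1, \dots, e_{k-1} \in E(H_0)$ and $e_k \in H_i$, set $U := \ed^{-1}(e_k) \setminus \{v_1,\dots,v_{k-1}\}$ so $|U| \ge N_i - (k-1)$. In the main case $|U| \le 1/p$, \Cref{lemma:large_neigh} supplies $|N(U,v_1,\dots,v_{k-1})| \ge (C/6)|U|\log n$; removing the at most $|U|$ vertices of $U$ and applying \Cref{lemma:connection}(e) produces an $f$ whose preimage inside $N(U)\setminus U$ has size $\ge \beta |U| \log n \ge N_{i+1}$, hence $f \in H_{i+1}$. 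By \Cref{lemma:connection}(d) the witnessing vertices show $f$ covers each $e_j$ (through adjacency to $v_j$) and also $e_k$ (through adjacency to some $u \in U = \ed^{-1}(e_k)$). The main obstacle is that \Cref{lemma:large_neigh} only applies when $|U| \le 1/p$; once $|U| > 1/p$ I would instead run the same argument on an arbitrary $U' \subseteq U$ of size $\lfloor 1/p \rfloor$, which produces an $f$ whose preimage already has size $\ge 10 \log n/p$ (after absorbing constants into $C$), so $f \in H_m \subseteq H_{i+1}$. This verifies P2 for every $i$ and completes the proof.
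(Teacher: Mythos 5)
Your overall strategy is exactly the paper's: pass to a bounded set $W$ via \Cref{lemma:connection}(a),(c), filter the edges of $H_0 = H(G,W,c)$ by the size of their $\ed$-preimage to get a nested chain $H_0 \supseteq \cdots \supseteq H_m$, and verify P1--P3 using \Cref{lemma:common_neigh_k_1}, \Cref{lemma:large_neigh}, \Cref{lemma:no_large_empty_bipartite} together with the pigeonhole \Cref{lemma:connection}(e) and the adjacency--intersection principle \Cref{lemma:connection}(d). Your explicit two-case handling of $|U|\le 1/p$ versus $|U|>1/p$ in P2 is in fact a touch more careful than the paper's write-up, which implicitly relies on passing to a subset of $\ed^{-1}(e_k)$ of size $\min(|\ed^{-1}(e_k)|,1/p)$ before invoking \Cref{lemma:large_neigh}.

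However, there is a real gap in your choice of thresholds. You set $N_i = \lceil \beta N_{i-1}\log n\rceil$ and take $m$ to be the least index with $N_m \ge 10\log n/p$, and then define $H_m = \{f : |\ed^{-1}(f)|\ge N_m\}$. The problem is that $N_m$ can overshoot $10\log n/p$ by a factor of order $\beta\log n$: since $N_{m-1} < 10\log n/p$, the best a priori bound is $N_m < \beta\cdot(10\log n/p)\cdot\log n + 1$. In your verification of P1 you produce an edge $f$ with $|\ed^{-1}(f)\cap A| \ge (C/(2C_W))\log n/p$ and conclude ``$f \in H_m$,'' but this requires $N_m \le (C/(2C_W))\log n/p$, which is false by a $\log n$ factor in the worst case. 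The same issue appears in your $|U| > 1/p$ subcase of P2 where you again deduce $f\in H_m$ from a preimage of size only $\Theta(\log n/p)$. The paper avoids this exactly by hard-coding the top two levels: it sets $n_{m-1} := 1/p$ and $n_m := 10\log n/p$ rather than letting the geometric recursion run all the way, which makes the pigeonhole bound land exactly on the required threshold. Replacing your $N_{m-1}, N_m$ by these capped values (and keeping the geometric $N_i = (10\log n)^i$ for $i\le m-2$) repairs the argument. One further cosmetic point: removing $\{v_1,\dots,v_{k-1}\}$ from $\ed^{-1}(e_k)$ when forming $U$ is unnecessary (the preimage witness may safely coincide with one of the $v_j$, since then the corresponding $e_j$ is covered trivially) and creates a spurious edge case where $U$ could become empty; the paper simply works with $\ed^{-1}(e_k)$ directly.
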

\begin{proof}
  We will prove in subsequent \Cref{lem:bounded} that $\Hi_r(k) \le \Hi_r(r)$ is bounded, so let us denote by $s = \Hi_r(k)$. Let $C$ be large enough and assume $G$ satisfies the properties given by Lemmas \ref{lemma:no_large_empty_bipartite}, \ref{lemma:common_neigh_k_1} and \ref{lemma:large_neigh}. Suppose $\tc_r(G) \geq s + 1$ and let $c$ be an $r$-edge-colouring of $G$ such that $G$ cannot be covered by $s$ monochromatic components. By Lemma \ref{lemma:connection}, $\tau(H(G, c)) \geq s + 1$ and there is a subset of vertices $W \subseteq V(G)$ of size at most $N_{r, s+1}$ such that $\tau(H(G, W, c)) \geq s + 1.$ Let $(H, \ed, \vt) = (H, \ed, \vt)(G, W, c)$ and let $H_0 = H.$ Let $m$ be the minimum integer such that $(10 \log n)^{m-1} \geq \frac{1}{p}.$ We will show that $H_0$ is $(k, m)$-coverable, thus contradicting $\Hi_r(k) = s.$

  Define 
  \[ n_i = \begin{cases}
  \left( 10 \log n \right)^i, &\text{for } 0 \leq i \leq m-2,\\
  \frac{1}{p}, &\text{for } i=m-1,\\
  \frac{10 \log n}{p}, &\text{for } i=m.
  \end{cases}
  \]

  Recall that $E(H_0) = \{ \ed(u) \, \vert \, u \in V(G) \}.$ For $1 \leq i \leq m,$ we define 
  \[ H_i = \left\{ e \in E(H_0) \; \big\vert \; |\ed^{-1}(e)| \geq n_i \right\}. \]

  We show that $H_0, \dots, H_m$ certify that $H$ is $(k, m)$-coverable. Trivially, $H_0 \supseteq H_1 \supseteq \ldots \supseteq H_m.$ Consider any $k-1$ edges $e_1 = \ed(v_1), \dots, e_{k-1} = \ed(v_{k-1})$ in $H$ and let $A = N(\{v_1\}, \dots, \{v_{k-1}\}).$ By the assumed property given by Lemma \ref{lemma:common_neigh_k_1}, $|A| \geq \frac{C}{2} \frac{\log n}{p}.$ For large enough $C,$ by Lemma~\ref{lemma:connection}~part~\ref{large_subset}, there exists $f \in E(H_0)$ such that
	\[ |\ed^{-1}(f)| \geq \left|\ed^{-1}(f) \cap A\right| \geq \frac{\frac{C}{2} \frac{\log n}{p}}{(N_{r, s+1} + 1)^r} \geq \frac{10 \log n}{p}. \]
	Thus, $f \in E(H_m),$ and by Lemma~\ref{lemma:connection}~part~\ref{neighbours_intersecting}, $f$ intersects the edges $e_1, \dots, e_{k-1}.$ Hence, Property \ref{cover_in_Hm} is satisfied.

	The proof of Property \ref{cover_in_next} goes along similar lines. Let $e_1, \dots, e_{k-1}$ be as before and consider an arbitrary edge $e_k \in E(H_i),$ where $0 \leq i \leq m-1.$ Let $A = N\big(\ed^{-1}(e_k), \{v_1\}, \dots, \{v_{k-1}\}\big).$ By definition of $H_i$ and by the assumed property from Lemma \ref{lemma:large_neigh}:
\[ |A| \geq \frac{C \log n}{6} \cdot |\ed^{-1}(e_k)| \geq \frac{C \log n}{6} \cdot n_i. \]
	Using Lemma~\ref{lemma:connection}~part~\ref{large_subset}, we can find $f \in E(H_0)$ such that 
  \[ |\ed^{-1}(f) \cap A| \geq \frac{\frac{C \log n}{6} \cdot n_i}{(N_{r, s+1}+1)^r} \geq (10 \log n) \cdot n_i \geq n_{i+1}. \]
  Thus, $f \in E(H_{i+1}),$ and $f$ covers the edges $e_1, \dots, e_k$ by Lemma~\ref{lemma:connection}~part~\ref{neighbours_intersecting}, so Property \ref{cover_in_next} holds.

Finally, consider arbitrary edges $e_1, e_2 \in E(H_m).$ By definition of $H_m$ and our assumption given by Lemma \ref{lemma:no_large_empty_bipartite}, there exists an edge between $\ed^{-1}(e_1)$ and $\ed^{-1}(e_2)$ in $G.$ Hence, $e_1$ and $e_2$ intersect by Lemma~\ref{lemma:connection}~part~\ref{neighbours_intersecting}, implying Property \ref{Hm_all_intersect}.

We conclude that $H_0$ is $(k, m)$-coverable, finishing the proof.
\end{proof}

\begin{rem}
  In the above argument the $m$ we work with is unbounded with $n$ so we really require an upper bound on $\tau(H)$ for a $(k,m)$-coverable $H$ for \emph{all} $m$. That said, if one only has the information about a fixed choice of $m$ the above argument can easily be modified to give the following slightly weaker bound on $p$. Suppose $t, r, k, m$ are positive integers such that any $(k, m)$-coverable $r$-partite $r$-graph satisfies $\tau(H) \leq t.$ Then, there is a positive constant $C$ such that if $(np^k)^m > \frac{C \log n}{p},$ then w.h.p. $G \sim \Gnp$ satisfies $\tc_r(G) \leq t.$
\end{rem}

\noindent Combining the previous two lemmas proves \Cref{prop:begin-range}.

The last result of this section is the lower bound counterpart to \Cref{lemma:ub_levels}. It shows that one can use a $(k,m)$-coverable hypergraph with a large cover number to prove lower bounds on $\tc_r(G).$ It gives weaker bounds than \Cref{lemma:lower_bound_simple} but requires a weaker condition. We only state it here for completeness, since we are not going to use it in this paper and point an interested reader to \cite{thesis} for its proof.

\begin{proposition} \label{prop:lb_levels}
	Let $r \ge k \geq 2$ be integers and suppose $\Hi_r(k) \geq s.$ Then, there exists $\varepsilon = \varepsilon(r, k, s) > 0$ such that for $p = n^{-\frac{1}{k} + \varepsilon},$ w.h.p. $G \sim \Gnp$ satisfies $\tc_r(G) \geq s.$
\end{proposition}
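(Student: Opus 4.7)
We apply \Cref{lemma:colouring_lb}: it suffices to construct a surjective map $\ed_0\colon V(G)\to E(H_0)$ such that $\ed_0(u)\cap\ed_0(v)\neq\emptyset$ for every $uv\in E(G)$, which yields $\tc_r(G)\ge\tau(H_0)\ge s$. Fix $m=m(r,k,s)$ with $\hi_r(k,m)\ge s$ and let $H$ be a $(k,m)$-coverable $r$-partite $r$-graph realising $\tau(H_0)=s$, with chain $H_0\supseteq H_1\supseteq\dots\supseteq H_m$; by \Cref{thm:bollobas} we may assume $|E(H_0)|$ is a constant depending only on $r$ and $s$. We take $\varepsilon=\varepsilon(r,k,s)>0$ sufficiently small, of order $1/k^{m+2}$.

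The construction of $\ed_0$ proceeds by iterating the template of \Cref{lemma:lower_bound_simple} across all $m$ levels of the coverability hierarchy. Pick any independent $|E(H_0)|$-set $L_0\subseteq V(G)$ and define $\ed_0$ on $L_0$ as a bijection onto $E(H_0)$. Then iteratively define layers $L_1,\dots,L_m$: a vertex $v\notin L_0\cup\dots\cup L_{j-1}$ belongs to $L_j$ if it has \emph{exactly} $k$ neighbours in $L_0\cup\dots\cup L_{j-1}$. A routine first-moment computation shows $|L_j|=O(n^{\alpha_j})$ where the exponents satisfy $\alpha_0=0$ and $\alpha_{j+1}\le k\alpha_j+k\varepsilon$, giving $\alpha_j\le k\varepsilon(k^j-1)/(k-1)$. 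For $\varepsilon$ in the stated range, standard union-bound arguments show that w.h.p.\ the following two events occur: (I) at every step $j\le m$, no vertex has $\ge k+1$ neighbours in $L_0\cup\dots\cup L_{j-1}$ (``no super-bad vertices''); (II) each layer $L_j$ for $j\ge 1$ is internally independent in $G$. Event~(I) at step $m$ implies that every vertex in the residual set $R:=V(G)\setminus\bigcup_{j\le m}L_j$ has at most $k-1$ neighbours in $L_0\cup\dots\cup L_{m-1}$.

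We now extend $\ed_0$ to $V(G)$ by processing the layers in order. For $v\in L_j$ with $j\ge 1$, by~(I), $v$ has exactly $k$ neighbours in the previously processed layers, whose images (by induction) lie in $H_0\cup H_1\cup\dots\cup H_{j-1}$. Moreover, $v$ must have at least one neighbour in $L_{j-1}$ itself, since otherwise $v$ would already have been placed in $L_{j-1}$. Apply Property~P2 of $(k,m)$-coverability with one such $L_{j-1}$-neighbour's image (lying in $H_{j-1}$) as the special edge $e_k$ and the remaining $k-1$ neighbour-images as the $H_0$-inputs, to obtain an intersector $\ed_0(v)\in H_j$ that meets all $k$ earlier neighbour-images. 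For $v\in R$, apply Property~P1 to any $(k-1)$-tuple extending $v$'s at most $k-1$ neighbours in $L_0\cup\dots\cup L_{m-1}$ (padding with arbitrary edges if necessary) to obtain $\ed_0(v)\in H_m$ intersecting each of these neighbour-images.

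The intersection property is verified by cases: $u,v\in L_0$ is vacuous since $L_0$ is independent; $u\in L_i,v\in L_j$ with $i<j$ is handled because $u$ lies among the $k$ edges used in $v$'s P2-application; $u,v\in L_j$ with $j\ge 1$ is ruled out by~(II); $u\in L_j$ with $j<m$ and $v\in R$ is handled because $u$ is one of $v$'s at most $k-1$ earlier neighbours used in the P1-application; $u\in L_m$ and $v\in R$ is handled because both $\ed_0(u),\ed_0(v)\in H_m$ and $H_m$ pairwise intersects by Property~P3; and $u,v\in R$ likewise via~P3. Surjectivity is immediate from the bijection on $L_0$, so \Cref{lemma:colouring_lb} yields the desired bound $\tc_r(G)\ge s$. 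The main obstacle is controlling events~(I) and~(II) simultaneously for a single $\varepsilon$: condition~(I) at step $m$ is the binding one, forcing $\alpha_{m-1}<1/(k(k+1))-\varepsilon$, which via the recursion yields the stated bound on $\varepsilon$; since $m$ is a constant depending only on $r,k,s$, such $\varepsilon=\varepsilon(r,k,s)>0$ indeed exists.
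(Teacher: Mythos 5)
The paper does not actually contain its own proof of this proposition --- it explicitly states ``We only state it here for completeness, since we are not going to use it in this paper and point an interested reader to \cite{thesis} for its proof.'' So there is no internal proof to compare against, and your argument must be judged on its own terms.

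Your overall strategy --- build a surjective $\ed_0 \colon V(G) \to E(H_0)$ in stages by threading the $(k,m)$-coverability chain $H_0 \supseteq \dots \supseteq H_m$ through a ``$k$-neighbour'' peeling process with layers $L_0, \dots, L_m, R$, and then invoke Lemma~\ref{lemma:colouring_lb} --- is a clean and correct generalisation of the single-layer construction in Lemma~\ref{lemma:lower_bound_simple}, and the combinatorial bookkeeping checks out. In particular: every $v \in L_j$ with $j\ge 1$ really does have a neighbour in $L_{j-1}$ (else it would have been placed in $L_{j-1}$), so P2 applies with $e_k \in H_{j-1}$ giving $\ed_0(v)\in H_j$; event~(I) at step $m$ does force $R$-vertices to have at most $k-1$ neighbours in $L_0\cup\dots\cup L_{m-1}$, enabling P1; and the case analysis for the intersection property is exhaustive once independence of each layer (event~(II)) is granted. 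The exponent recursion $\alpha_{j+1} \le k\alpha_j + k\varepsilon$ is right, and all the required events do hold w.h.p.\ for $\varepsilon$ of order $k^{-(m+2)}$, which is a constant depending only on $r,k,s$ since $m$ is. A few minor points deserve flagging. First, the appeal to Theorem~\ref{thm:bollobas} to bound $|E(H_0)|$ is both unnecessary (the witness hypergraph $H_0$ is a fixed object chosen in advance of $n$, so its size is automatically a constant) and not obviously legitimate as stated, since restricting the chain $H_0\supseteq\dots\supseteq H_m$ to a Bollob\'as-extracted sub-hypergraph need not preserve P1--P3. Second, your remark that (I) at step $m$ is the binding constraint is slightly off: a quick check shows the internal-independence condition~(II) on $L_m$ is in fact comparably or slightly more restrictive on $\varepsilon$; this does not affect the conclusion since both constraints are met for $\varepsilon$ small enough, but the commentary is misleading. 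Third, to make the first-moment computation rigorous one should condition on all $|L_j|$ being bounded by, say, $n^{\alpha_j+\delta}$ (which holds w.h.p.\ by Markov) before estimating the probability that (I) or (II) fails, noting that edges internal to $L_j$ are disjoint from the edges that determine $L_j$; this is routine but is being elided when you write ``standard union-bound arguments.''
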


Let us briefly summarize the above results. Suppose for some fixed $r,t$ we want to determine the threshold for having $\tc_r(\Gnp) \leq t.$ Determining $\kc_r(t) = k$ places the threshold between $\left({\log n}/{n}\right)^{1/k}$ and $\left({\log n}/{n}\right)^{1/(k+1)}$ by \Cref{thm:upto1}. We may attempt to close this gap by studying $\Hi_r(k).$ If $\Hi_r(k) \leq t,$ we close the gap and the threshold is precisely $\left({\log n}/{n}\right)^{1/k}$ by \Cref{lemma:ub_levels}. This is precisely what happens in case $r=t=3$ and leads to our \Cref{thm:three_threshold}. It remains possible that precisely this way one can determine the threshold for $\tc_r(\Gnp) \leq t$ whenever $r=t,$ although even for $r=3, t=4,$ this method fails. In such a case, that is, when $\Hi_r(k) > t,$ we can improve the lower bound on the threshold by a factor of $n^{\varepsilon}$ through \Cref{prop:lb_levels}. In fact in this case, using the remark following \Cref{lemma:ub_levels} one can also obtain an improved upper bound, with the improvement depending on the minimum $m$ for which there exists a $(k,m)$-coverable $r$-graph with cover number larger than $t.$

From the definitions, we know that $\hi_r(k-1) \geq \Hi_r(k) \geq \hi_r(k).$ Therefore, in general to determine the threshold for $\tc_r(G) \leq t,$ one should determine $\kc_r(t)$ exactly and only then it makes sense to study $(k,m)$-coverable hypergraphs, in particular, to determine whether $\Hi_r(\kc_r(t)) \leq t.$

\section{Three colours}
In this section we demonstrate how to use the tools we developed in the preceding section to prove \Cref{thm:three_threshold}. 

The lower bound part of Theorem \ref{thm:three_threshold} follows from the example of Ebsen, Mota and Schnitzer (see \cite{kohayakawa2019monochromatic}). Their example can actually be easily translated into our language as follows. We want to find a $3$-partite $3$-uniform hypergraph $H$ which has $\tau(H) \ge 4$ but for any three edges we can fix a cover in such a way that they all intersect.  We claim that taking $H$ to consist of four disjoint edges suffices (and after translation precisely matches the construction of Ebsen, Mota and Schnitzer). To see this, we present a construction which matches Figure~\ref{fig:auxilliary}. Let $r_1,b_1,g_1$ be three vertices belonging to different parts and different edges in $H$ and denote the remaining edge as $r_2b_2g_2$ (where, w.l.o.g. we assume $r_1,r_2$ belong to the same part, as well as $b_1,b_2$ and $g_1,g_2$). We can take as our fixed covers $r_1b_1g_1,r_1b_1g_2,r_1b_2g_1,r_2b_1g_1$. Note that any pair of them intersect, if we drop any edge from $H$ one of these covers covers the remaining edges, and since $H$ consists of four disjoint edges, $\tau(H) \ge 4$. This shows\footnote{Technically, to have intersecting $3$-covers, we require the covers to be edges, but as per remark after \Cref{lemma:lower_bound_simple} we can always simply add the covers to $H$ in order to ensure this holds.} $\kc_3(3) \ge 4$ . 
Hence, by Theorem \ref{thm:upto1} part a), the threshold is indeed at least $(\log n /n )^{1/4}$.

The simplicity of the example in our setting showcases the power of thinking about the tree covering problem in the hypergraph setting. It also might lead one to believe that it can not possibly be tight, since there is no interaction between the edges. Another reason towards this is that there are plenty more complicated examples establishing the same bound. This intuition indeed turned out to be true already for $r\ge 4$, as was shown in \cite{bucic2019covering}. Indeed, for the threshold for $\tc_r(\Gnp) \leq r,$ the example above only gives a lower bound matching the threshold that any $r+1$ vertices have a common neighbour, while they found an example showing the edge probability should be at least such that any $\binom{r-1}{\floor{(r-1)/2}}+\binom{r-1}{\ceil{(r-1)/2}}$ vertices are likely to have a common neighbour.

Let us now turn towards proving the upper bound. We begin with some notation. Let $H$ be a $3$-partite $3$-graph. We denote the parts of $H$ by $V_R, V_B, V_G.$ We denote the vertices in part $V_R$ with $r_1, r_2, \dots,$ and analogously we use $b_1, b_2, \dots$ for vertices in part $V_B$ and $g_1, g_2, \dots$ for vertices in part $V_G.$

The following specific hypergraph, which we denote by $H^*$ and which we have actually seen in the above example, plays an important role in our proof.

\begin{minipage}[t]{0.45\textwidth}
\centering
\vspace{-3.3cm}
\begin{align*} \label{special_H} 
  V(H^*) &= \{ r_1, r_2, b_1, b_2, g_1, g_2 \}, \\
  E(H^*) &= \{ r_1b_1g_2, r_1b_2g_1, r_2b_1g_1, r_2b_2g_2 \}.
\end{align*}
\end{minipage}%
\hfill
\begin{minipage}[t]{0.45\textwidth}
\vbox{
  \centering
  \includegraphics[scale=0.8]{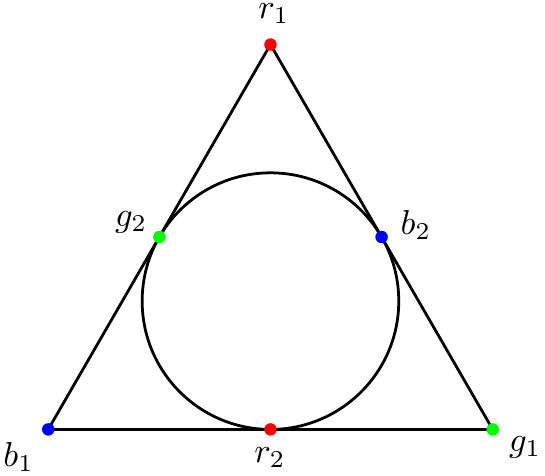}
  \captionof{figure}{$H^*$}
}
\end{minipage}

We state several immediate properties of $H^*.$
\begin{itemize}
  \item Any two edges in $H^*$ intersect.
  \item If an $r$-partite hypergraph $H$ contains a copy of $H^*,$ then any edge of $H$ which intersects each edge of $H^*$ belongs to this copy of $H^*$ in $H.$
  \item $H^*$ is symmetric with respect to edges and with respect to parts.
\end{itemize}
The first property is immediate. For the second one consider an edge $e$ of $H$. If $|e \cap \{r_1,b_1,g_1\}|\ge 2,$ the fact $e$ must intersect $r_2b_2g_2$ implies its third vertex must be indexed by $2$ so it belongs to $H^*$. Otherwise, if it contains $r_1$ it cannot contain $b_1,g_1$ so it cannot intersect $r_2b_1g_1,$ so the only option is that it is equal to $r_2b_2g_2,$ which belongs to $H^*$ as claimed. The last point is immediate and we only use it to simplify our proofs, e.g. if $f$ is an edge which does not intersect some edge $e \in E(H^*),$ we may assume that $e = r_2b_2g_2.$

We begin with a simple characterization of intersecting $3$-partite $3$-graphs.
\begin{lemma} \label{lemma:all_intersecting_structure}
  Let $H$ be a $3$-partite $3$-graph such that all of its edges are pairwise intersecting. Then, $H$ satisfies one of the following:
  \begin{enumerate}[label=\alph*)]
    \item \label{all_intersect} $\bigcap_{e \in E(H)} e \neq \emptyset$; or
    \item \label{two_out_of_three} there exist vertices $r, b, g$ in different parts such that $\forall e \in E(H),$ $|e \cap \{r, b, g\}| \geq 2$; or
    \item \label{special_H} $H$ is isomorphic to $H^*$ up to isolated vertices.
  \end{enumerate}
\end{lemma}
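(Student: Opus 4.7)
My approach is a case analysis based on the intersection size of two chosen edges of $H$. I will assume that case (a) fails and derive either (b) or (c). The trivial case $|E(H)|\le 1$ falls under (a), so I pick distinct edges $e_1,e_2$, which must intersect in one or two vertices; labeling vertices of $H$ as $r_i\in V_R$, $b_i\in V_B$, $g_i\in V_G$, I treat the two configurations separately.

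\textbf{Two-vertex intersection.} Taking $e_1=r_1b_1g_1$ and $e_2=r_1b_1g_2$ with $g_2\neq g_1$, any other edge must intersect both and cannot contain two distinct vertices of $V_G$, so it contains $r_1$ or $b_1$. If every edge contains $r_1$ (or $b_1$) then (a) holds, so the classes $E_R$ of edges containing $r_1$ but not $b_1$ and $E_B$ of edges containing $b_1$ but not $r_1$ are both nonempty. An $E_R$-edge has the form $r_1b'g'$ and an $E_B$-edge the form $r''b_1g''$, and since $r_1\neq r''$ and $b_1\neq b'$ they can only meet in $V_G$, forcing $g'=g''$. This pins down a single vertex $g_3\in V_G$ lying in every edge of $E_R\cup E_B$, so every edge of $H$ contains at least two vertices of $\{r_1,b_1,g_3\}$, which is case (b).

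\textbf{Single-vertex intersection.} I take $e_1=r_1b_1g_1$ and $e_2=r_1b_2g_2$ with $b_2\neq b_1$, $g_2\neq g_1$. If every edge contains $r_1$ then (a) holds, so some edge $e_3$ avoids $r_1$. Then $e_3$ must meet both $\{b_1,g_1\}$ and $\{b_2,g_2\}$, and the same-part options are impossible, so by the $V_B\leftrightarrow V_G$ symmetry I may take $e_3=r_3b_1g_2$ with $r_3\neq r_1$. The three edges $e_1,e_2,e_3$ each contain exactly two vertices of the triangle $T=\{r_1,b_1,g_2\}$, so if (b) fails there must exist $h\in E(H)$ with $|h\cap T|\le 1$. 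Checking $h\cap T\in\{\{r_1\},\{b_1\},\{g_2\},\emptyset\}$ in turn: the first three options force $h$ to be disjoint from $e_3$, $e_2$, $e_1$ respectively, contradicting pairwise intersection, and only $h\cap T=\emptyset$ survives. The intersection constraints against $e_1,e_2,e_3$ then uniquely determine $h=r_3b_2g_1$, and the four edges $e_1,e_2,e_3,h$ form a copy of $H^*$ (after relabeling $g_1\leftrightarrow g_2$).

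\textbf{Closing step.} To finish case (c), I invoke the second property listed after the definition of $H^*$: every remaining edge of $H$, being forced to intersect all four edges of this copy, must already belong to it, so $H$ equals this $H^*$ together with isolated vertices. The main technical point will be the single-vertex-intersection case, where one needs to exploit the $V_B\leftrightarrow V_G$ symmetry to reduce to a canonical $e_3$ and then carefully run through the four subcases for $h$ to verify that the only alternative to case (b) is the rigid $H^*$ pattern; the rest is routine intersection bookkeeping.
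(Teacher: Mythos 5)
Your proof is correct and follows essentially the same route as the paper: in the heart of the argument (your single-vertex case) you locate the same four edges $e_1,e_2,e_3,h$ forming $H^*$ and close with the same rigidity property of $H^*$. The only structural difference is that you branch at the start on the intersection size of an arbitrary pair of edges (handling the two-vertex case directly to get (b)), whereas the paper assumes (b) fails from the outset and uses that failure to immediately produce a pair meeting in a single vertex.
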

\begin{proof}
  We assume that \ref{all_intersect} and \ref{two_out_of_three} do not hold and show that then \ref{special_H} holds. Clearly $H$ is not empty. Let $e_1 = r_1b_1g_2$ be an edge in $H.$ Since \ref{two_out_of_three} does not hold for the vertex set $\{r_1, b_1, g_2\}$, there exists an edge $e_2 \in E(H)$ intersecting $e_1$ in precisely one vertex. Without loss of generality, we can assume $e_2 = r_1b_2g_1 \in E(H).$ Since not all edges share a vertex, there is some edge $e_3$ not containing $r_1,$ say $r_2 \in e_3.$ Because $e_3$ intersects $e_1$ and $e_2,$ it follows that $e_3 = r_2b_1g_1$ or $e_3 = r_2b_2g_2.$ These cases are symmetric (by relabeling $b_1 \Longleftrightarrow b_2$ and $g_1 \Longleftrightarrow g_2$), so we may assume $e_3 = r_2b_1g_1.$ Because \ref{two_out_of_three} does not hold for $\{r_1, b_1, g_1\},$ there is an edge $e_4$ with at most one vertex in $\{r_1, b_1, g_1\}.$ It is easy to see that the only way such an $e_4$ can intersect $e_1, e_2, e_3$ is if $e_4 = r_2b_2g_2 \in E(H).$ So these $4$ edges make a copy of $H^{*}$. Finally, by the second property of $H^{*}$ observed above, there cannot be an additional edge $e_5$ different from $e_1, e_2, e_3, e_4$ which intersects all the previous edges. Hence, $H$ is isomorphic to $H^*$ up to isolated vertices, as desired.
\end{proof}

\begin{rem}
This lemma immediately implies Ryser's conjecture for intersecting $3$-partite hypergraphs since in each of the three cases it is easy to see the cover number is at most $2$. This was first proved by Tuza in 1983 \cite{tuza}.
\end{rem}

We will use the following simple observation several times in our proof.
\begin{observation}
  \label{obs:konig}
  Let $H$ be a $3$-partite $3$-graph and let $T$ be a set of $t$ vertices, for some $t \leq \tau(H).$ Let $A$ be an arbitrary part of $H.$ Then, there exists a set $F \subseteq E(H)$ of $\tau(H) - t$ edges disjoint from $T$, such that any pair of distinct edges in $F$ intersect only inside part $A$.
\end{observation}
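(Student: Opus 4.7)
The plan is to reduce to K\H{o}nig's theorem (\Cref{thm:konig}) via a projection to the two parts other than $A$. Let the three parts of $H$ be $A$, $B$, $C$. Write $T_B = T \cap B$, $T_C = T \cap C$, $T_A = T \cap A$, and form the bipartite graph $G'$ on vertex classes $B \setminus T$ and $C \setminus T$ in which $bc$ is an edge iff there exists some $a \in A \setminus T$ with $\{a,b,c\} \in E(H)$. A matching in $G'$ of size $s$ immediately yields $s$ edges of $H$ that avoid $T$ (because their $A$-vertex lies in $A \setminus T$ and their $B$ and $C$ vertices lie outside $T$ by construction) and that pairwise share at most their $A$-vertex, so they intersect only inside $A$. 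Thus it suffices to exhibit a matching in $G'$ of size $\tau(H) - t$.

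To this end I would bound the cover number of $G'$ from below. Suppose $S \subseteq V(G')$ is any vertex cover of $G'$; I claim $S \cup T$ is a vertex cover of $H$. Indeed, any edge $e = \{a,b,c\} \in E(H)$ with $a \in A$, $b \in B$, $c \in C$ either meets $T$ (and is covered by $T$), or else $a \notin T$, $b \notin T$, $c \notin T$, in which case $bc$ is an edge of $G'$ and so is hit by $S$. Therefore $|S| + |T| \geq |S \cup T| \geq \tau(H)$, giving $|S| \geq \tau(H) - t$. Hence the cover number of $G'$ is at least $\tau(H) - t$, and by \Cref{thm:konig} (K\H{o}nig's theorem) $G'$ contains a matching $M$ of size at least $\tau(H) - t$.

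For each edge $bc \in M$ pick some edge $e_{bc} \in E(H)$ of the form $\{a,b,c\}$ with $a \in A \setminus T$ (which exists by the definition of $G'$), and let $F$ be the collection of all such $e_{bc}$. Then $|F| = |M| \geq \tau(H) - t$ (and by taking an arbitrary subset we may assume equality); each edge in $F$ is disjoint from $T$; and for two distinct edges $e_{bc}, e_{b'c'} \in F$, the pairs $(b,c)$ and $(b',c')$ come from distinct matching edges of $M$, so $\{b,c\} \cap \{b',c'\} = \emptyset$, meaning $e_{bc} \cap e_{b'c'} \subseteq A$. This $F$ is the desired set.

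There is no real obstacle here: the content is essentially that a minimum transversal cannot be improved by replacing the projection to $B \cup C$ by a smaller set, and this is exactly the K\H{o}nig/LP-duality statement applied to the natural bipartite projection. The only mild care needed is to make sure that removing $T$ from the hypergraph before projecting still leaves the covering inequality tight, which is handled cleanly by adjoining $T$ to any cover of $G'$.
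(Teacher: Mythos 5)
Your proof is correct and follows essentially the same route as the paper: project the edges of $H$ avoiding $T$ onto the two parts other than $A$, observe that adjoining $T$ to any cover of the resulting bipartite graph covers $H$, apply K\H{o}nig's theorem to extract a matching of size $\tau(H)-t$, and lift each matching edge back to an edge of $H$. The only cosmetic difference is that you explicitly restrict the bipartite vertex classes to $B\setminus T$ and $C\setminus T$, whereas the paper keeps the full parts but only projects edges disjoint from $T$, which amounts to the same graph.
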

\begin{proof}
  Let $E' \subseteq E(H)$ be the set of edges disjoint from $T.$ Without loss of generality, assume $A = V_R,$ and consider the bipartite graph $G$ with parts $(V_B, V_G)$ and the edge set $E(G) = \{ \left(e \cap V_B, e \cap V_G\right) \; \vert \; e \in E' \}.$ Observe that we can cover $H$ using $T$ and the minimum cover of $G,$ so $\tau(G) \geq \tau(H) - t.$ By the K\H{o}nig-Egerv\'{a}ry theorem, we can find a matching of size $\tau(H) - t$ in $G.$ For every edge in this matching, take one corresponding edge in $H.$ This set of edges is disjoint from $T$ and outside $A$ span a matching, so can only intersect inside $A$.
\end{proof}

We are now ready to prove the final ingredient missing for our proof of \Cref{thm:three_threshold}.

\begin{lemma} \label{lemma:hi34}
  $\Hi_3(4) \leq 3.$
\end{lemma}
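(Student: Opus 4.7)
My plan is to argue by contradiction: suppose $H$ is $(4,m)$-coverable and $\tau(H) \ge 4$, and derive a contradiction. Since $H_m$ is intersecting by Property~\ref{Hm_all_intersect}, Lemma~\ref{lemma:all_intersecting_structure} applies and splits the analysis into three cases (a), (b), (c) based on the structure of $H_m$.

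Case (a), where every edge of $H_m$ shares a common vertex $v$ (say $v \in V_R$), is quick. Observation~\ref{obs:konig} with $T=\{v\}$ and $A = V_R$ yields three edges $f_1, f_2, f_3 \in E(H)$ avoiding $v$ whose $V_B$-coordinates are pairwise distinct and whose $V_G$-coordinates are pairwise distinct. Property~\ref{cover_in_Hm} produces $h \in H_m$ meeting all three; since $v \in h$, writing $h = \{v,b,g\}$, the vertex $b$ matches at most one $f_i^B$ and $g$ matches at most one $f_i^G$, so $h$ covers at most two of the $f_i$, a contradiction. Case (b), where there exist $r, b, g$ in distinct parts with every $H_m$-edge containing at least two of them, is handled by considering the family $F \subseteq E(H)$ of edges disjoint from $\{r,b,g\}$: if $F=\emptyset$ then $\{r,b,g\}$ covers $H$ and we are done, and otherwise \ref{cover_in_Hm} applied to any three edges of $F$ forces the resulting $H_m$-cover's unique vertex outside $\{r,b,g\}$ to coincide with a coordinate of all three $f_i$; a sunflower-type argument on $F$ then shows $F$ is covered by a single vertex $w$, giving the 3-cover $\{r,b,w\}$ (or a suitable two-vertex subcover of $\{r,b,g\}$ together with $w$).

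Case (c) is the main challenge: $H_m \cong H^*$ on vertex set $V(H^*) = \{r_1, r_2, b_1, b_2, g_1, g_2\}$. First, \ref{cover_in_Hm} applied to any edge $e \in E(H)$ together with two auxiliary edges forces $e$ to meet $V(H^*)$, since no edge of $H^*$ could otherwise cover it. If some edge $h \in E(H^*)$ already covers all of $H$, we are done since $|h|=3$. Otherwise, for each $h_i \in E(H^*)$ pick a witness $f_{h_i} \in E(H)$ with $f_{h_i} \cap h_i = \emptyset$. Applying~\ref{cover_in_Hm} to the triple $\{f_{h_j}: j \neq i\}$, the cover must belong to $E(H^*)$ but meet each witness; since $f_{h_j}$ excludes $h_j$ from its cover set, the only remaining $H^*$-edge is $h_i$ itself. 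Consequently each witness $f_{h_j}$ meets every $h_i$ with $i \neq j$, which pins down that $f_{h_j}$ must contain at least two vertices of the triple $V(H^*) \setminus h_j$. From here I would exploit the symmetries of $H^*$ and enumerate the few possibilities for the single coordinate of each $f_{h_j}$ lying outside $V(H^*)$, producing in each case an explicit 3-cover of the form $\{r_1, r_2, w\}$, $\{b_1, b_2, w\}$, or $\{g_1, g_2, w\}$ for an appropriately chosen $w$.

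The main obstacle I expect is verifying this last step: the four witnesses $f_{h_j}$ admit several structural configurations (depending on whether the ``outside'' coordinate lies in $V_R$, $V_B$, or $V_G$, and whether different witnesses share such a coordinate), and one must check that every configuration admits a 3-cover. If \ref{cover_in_Hm} alone does not rule out a configuration, I would invoke \ref{cover_in_next} to bring the intermediate levels $H_{m-1}, H_{m-2}, \ldots$ into play, using the fact that increasingly high-level covers must also exist to sharpen the constraints on the witnesses.
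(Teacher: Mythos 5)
Your case (a) is correct and essentially matches the paper's argument. However, cases (b) and (c) both contain genuine gaps, and both gaps stem from the same missing idea: the paper's proof crucially exploits the \emph{level structure} via Property~\ref{cover_in_next}, whereas you rely almost entirely on Property~\ref{cover_in_Hm}, which only covers triples of edges. With $k=4$, Property~\ref{cover_in_Hm} handles $k-1=3$ edges; to force a contradiction from \emph{four} forbidden edges one needs Property~\ref{cover_in_next}, and the fourth edge must be chosen at a carefully selected level.

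In case (b), your sunflower argument (granting it works) shows that the edges $F$ disjoint from $\{r,b,g\}$ share a common vertex $w$, but this only yields the $4$-cover $\{r,b,g,w\}$, not a $3$-cover. There may well be edges of $H$ meeting $\{r,b,g\}$ in exactly one vertex, say only $g$, while avoiding $w$; such edges are missed by $\{r,b,w\}$, and your parenthetical ``suitable two-vertex subcover'' is not justified. The paper instead takes $\ell$ to be the maximal index such that $H_\ell$ contains an edge $e$ with $|e\cap\{r,b,g\}|\le 1$ (well-defined since $\tau(H)>3$), so every edge of $H_{\ell+1}$ has $\ge 2$ vertices in $\{r,b,g\}$; it then feeds $e$ as the fourth (level-$\ell$) edge into Property~\ref{cover_in_next} together with three edges of $H_0$ produced via Observation~\ref{obs:konig} (when $e$ contains $r$) or via $\tau(H)>3$ (when $e$ is disjoint from $\{r,b,g\}$), and derives a pigeonhole contradiction. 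In case (c), you apply Property~\ref{cover_in_Hm} to triples of the four witnesses $f_{h_i}$ and deduce useful structural constraints on those four edges, but an ``explicit $3$-cover'' read off from the witnesses cannot be guaranteed to cover the (possibly many) remaining edges of $H$ — you are implicitly claiming that constraints on $4$ edges determine a global cover. The paper's proof here is much shorter: since $\tau(H_0)>3>\tau(H^*)$ we have $H_0\ne H^*$, so there is an $\ell$ with $H_\ell\ne H^*$ and $H_{\ell+1}=H^*$; pick $e\in H_\ell\setminus H^*$ (disjoint from some $f_4\in H^*$ by the second stated property of $H^*$) together with $e_1,e_2,e_3\in E(H_0)$ disjoint from $f_1,f_2,f_3$ respectively (using $\tau(H)>3$), and Property~\ref{cover_in_next} forces a cover in $H_{\ell+1}=H^*$, which is impossible since each of $e,e_1,e_2,e_3$ rules out one of the four edges of $H^*$. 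Your closing remark that you ``would invoke \ref{cover_in_next} to bring the intermediate levels into play'' points in the right direction, but the essential step — choosing the level $\ell$ extremally and using a \emph{single} edge from $H_\ell$ as the privileged fourth input to Property~\ref{cover_in_next} — is the heart of the proof and is absent from your outline.
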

\begin{proof}
  Assume $\Hi_3(4) > 3.$ Then, there exist $m \in \mathbb{N}$ and a $(4,m)$-coverable $3$-partite $3$-graph $H$ with $\tau(H) > 3.$ Let $H = H_0, H_1, H_2, \dots, H_m$ be hypergraphs certifying that $H$ is $(4,m)$-coverable. By assumption, the edges of $H_m$ pairwise intersect, so by Lemma \ref{lemma:all_intersecting_structure}, $H_m$ satisfies one of the following:
  \begin{enumerate}[label=\alph*)]
    \item $\bigcap_{e \in E(H_m)} e \neq \emptyset$; or
    \item there exist vertices $r, b, g$ in different parts such that $\forall e \in E(H_m),$ $|e \cap \{r, b, g\}| \geq 2$; or
    \item $H_m$ is isomorphic to $H^*$ up to isolated vertices.
  \end{enumerate}

  We handle these cases in order.
  \begin{enumerate}[label=\alph*)]
    \item Without loss of generality, assume $r_1 \in \bigcap_{e \in E(H_m)} e.$ By Observation \ref{obs:konig}, there exist three edges $e_1, e_2, e_3$ not containing $r_1$ such that $e_i \cap e_j \subseteq V_R, \, 1 \leq i < j \leq 3.$ However, these three edges cannot have a transversal cover containing $r_1,$ so in particular can't be covered by an edge of $H_m$, contradicting property \ref{cover_in_Hm}.
    \item Let $r_1, b_1, g_1$ be vertices such that $|e \cap \{r_1, b_1, g_1\}| \geq 2, \forall e \in E(H_m).$ Let $0 \leq \ell < m$ be the maximum index $i$ such that there exists an edge $e \in E(H_i)$ satisfying $|e \cap \{r_1, b_1, g_1\}| \leq 1.$ As $\tau(H_0) > 3,$ there exists such an edge in $E(H_0),$ so $\ell$ is well-defined. Without loss of generality, we may assume there exists an edge $e = r_ib_jg_k \in E(H_\ell)$ with $j, k \neq 1.$ We consider two cases:
    \begin{itemize}
      \item $i = 1.$ Observe that by the maximality of $\ell$ any edge $f$ in $E(H_{\ell + 1})$ intersecting $e$ must contain $r_1$ (otherwise it does not contain $r_1$ so is forced to contain $b_1,g_1$ and so indeed does not intersect $e$). By Observation \ref{obs:konig}, there exist three edges $e_1, e_2, e_3 \in E(H_0)$ not containing $r_1$ such that $e_i \cap e_j \subseteq V_R, \, 1 \leq i < j \leq 3.$ By assumption there exists an edge $f \in E(H_{\ell+1})$ covering $e, e_1, e_2, e_3.$ Because $f$ intersects $e,$ we have $r_1 \in f$ by the above claim. However, then $f$ can only cover two out of $e_1, e_2, e_3,$ since they do not contain $r_1$ and restricted to $V_B \cup V_G$ span a matching. This is a contradiction.
      \item $i \neq 1.$ Since $\tau(H) > 3,$ there exists an edge $e_1 \in E(H)$ \emph{disjoint} from $\{r_i, b_1, g_1\};$ an edge $e_2$ \emph{disjoint} from $\{r_1, b_j, g_1\}$ and an edge $e_3$ \emph{disjoint} from $\{r_1, b_1, g_k\}.$ The edges $e, e_1, e_2, e_3$ cannot be covered by an edge with two vertices in $\{r_1, b_1, g_1\},$ since the third vertex would need to be $r_i,b_j$ or $g_k$ and each of the three cases is precisely excluded by one of $e_1,e_2$ or $e_3$. This is once again a contradiction.
    \end{itemize}
    \item $E(H_m) = E(H^*) = \{f_1, f_2, f_3, f_4\}.$ Let $\ell, \, 0 \leq \ell < m$ be an index such that $H_{\ell} \neq H^*$ and $H_{\ell+1} = H^*.$ From $\tau(H) > 3$ it follows that $H_0 \neq H^*,$ so $\ell$ is well-defined. Let $e \in H_{\ell} \setminus H^*.$ By the second property of $H^*,$ there exists some $f \in E(H^*)$ disjoint from $e.$ Without loss of generality, $e \cap f_4 = \emptyset.$ Since $\tau(H) > 3,$ there are edges $e_1, e_2, e_3 \in E(H_0),$ such that $e_i \cap f_i = \emptyset, \, 1 \leq i \leq 3.$ The edges $e, e_1, e_2, e_3$ then do not have a cover in $H_{\ell+1} = H^*,$ a contradiction.
    \end{enumerate}
    \vspace{-0.9 cm}
\end{proof}

Combining Lemmas~\ref{lemma:ub_levels}~and~\ref{lemma:hi34} we obtain the upper bound in \Cref{thm:three_threshold} and hence complete its proof.

\section{Concluding remarks and open problems}
In this paper we continue the study of the function $\tc_r(\Gnp)$ defined as the minimum number of monochromatic components needed to cover all vertices of $G \sim \Gnp$ for any $r$-colouring of its edges. Our main contribution is a further development of a connection, first observed in \cite{bucic2019covering}, between this function and the following type of natural local to global, Helly-type problems for hypergraphs first introduced by Erd\H{o}s, Hajnal and Tuza \cite{tuza-helly} about 30 years ago. 

Given an $r$-uniform hypergraph $H$ in which we know that any $k$ edges have a cover of size $r$ how large can the cover number of $H$ be? Proving upper bounds for this question translate to upper bounds on the threshold for $\tc_r(\Gnp) \le t$ with an appropriate relation between the parameters.
This was observed in \cite{bucic2019covering} where they also show that one can consider a slightly easier hypergraph problem, namely it is enough to restrict attention to $r$-partite hypergraphs and one may assume that the covers of any $k$ edges are transversals. While this allowed them to obtain slightly stronger upper bounds the main advantage is that the connection in this form goes the other way as well, namely if one has a construction for this $r$-partite variant of the hypergraph problem then this leads to a lower bound for the threshold problem. Unfortunately, there is some loss involved (essentially one needs one extra colour). 

Our new result allows one to consider an even easier problem, namely we may assume that all the covers of $k$ edges intersect and are actual edges of the hypergraph, leading to the following question.

\begin{question}\label{qn:int-covers}
  Let $H$ be an $r$-partite $r$-graph in which we can fix a transversal cover for any $k$ edges, which is an edge of $H$ itself and with the property that any two such covers intersect. What is the maximum value of $\tau(H)$? 
\end{question}

We show that the smallest value of $k$ which guarantees $\tau(H) \le t$, which we denote by $\kc_r(t)$, controls very tightly the threshold for $\tc_r(\Gnp)\le t$. In particular, the threshold is in-between the point when any $\kc_r(t)$ vertices in $\Gnp$ have a common neighbour and the point when any $\kc_r(t)+1$ vertices have a common neighbour.

There are two main benefits of our new approach. It makes it much easier to prove upper bounds on the thresholds and makes it possible to prove much tighter bounds. In particular, this allows one to shift attention to the hypergraph problem which seems much easier to handle, as evidenced both by results in \cite{bucic2019covering} and our new improvements.

Theorem 6.1 in \cite{bucic2019covering} together with a relation between our parameter $\kc_r$ and their parameter $\text{hp}_r$ determines the answer to \Cref{qn:int-covers} up to a logarithm factor, namely it is between $\Omega\left(\frac{r^2}{\log k}\right)$ and $O\left(\frac{r^2 \log r}{\log k}\right)$.  It would be very interesting to determine whether this $\log r$ term is necessary or not. 

Another question of particular interest, since it translates to determining the threshold for when $\tc_r(\Gnp) \le r$, is to determine $\kc_r(r)$ more precisely. Similarly from Theorem 6.1 in \cite{bucic2019covering} we can get $\Omega(2^r/\sqrt{r})\le \kc_r(r) \le 2^r.$ Here, given the results for small $r$ it seems far more likely the lower bound is closer to the truth and our easier variant of the problem could be helpful in showing this.

Finally, it could be interesting to find a hypergraph cover problem whose solution precisely determines the threshold for $\tc_r(\Gnp) \le t$. We have shown that a further refinement is possible through considering level hypergraphs and this was instrumental in determining the correct threshold for $\tc_3 (\Gnp) \le 3$. The key obstacle here is our limited understanding of the following nice random graph problem.

Given two graphs $G, F$ we say that a function $f \colon V(G) \rightarrow V(F)$ is a graph \textit{homomorphism} from $G$ to $F$ if $f(u)f(v) \in E(F)$ whenever $uv \in E(G).$  
For a hypergraph $H,$ its intersection graph, denoted by $\intersection(H),$ is the graph whose vertices are the edges of $H$ and two vertices are joined by an edge whenever the corresponding edges intersect. In this definition we allow self-loops so for any non-empty edge of $H,$ the corresponding vertex in $\intersection(H)$ has a loop.

\begin{question} \label{question:int_graph}
  Given a fixed graph $F,$ determine the threshold $p = p(F)$ for the existence of a surjective homomorphism from the random graph $G \sim \Gnp$ to $F.$
\end{question}
If we were able to determine this threshold for all graphs $F,$ then we can essentially find the threshold for $\tc_r(G) \leq t,$ for any fixed values of  $r$ and $t.$ Indeed, this threshold is equal to $\max_H p(\intersection(H)),$ where $H$ ranges over all of the, finitely many, hypergraphs with parts of size $N_{r, t+1}$ satisfying $\tau(H) > t.$ To see this, recall that by Lemma~\ref{lemma:colouring_lb}, a surjective homomorphism from $G$ to $\intersection(H)$ provides a colouring of $G$ for which at least $\tau(H)$ monochromatic components are needed to cover the vertices of $G$. On the other hand, Lemma~\ref{lemma:connection} parts \ref{small_subgraph} and \ref{neighbours_intersecting} imply that if at least $t+1$ monochromatic components are needed to cover $G,$ then there exists a surjective homomorphism from $G$ to $\intersection(H),$ for some $r$-partite $r$-graph $H$ with cover number at least $t+1$ and at most $N_{r, t+1}$ vertices in each part.

Of course, Question \ref{question:int_graph} might be significantly easier if we restrict $F$ to a certain class of graphs. Some of the results of Section \ref{sec:connection} can be viewed as bounds for $p(F)$ when $F$ is an intersection graph of an $r$-partite $r$-graph with a large cover number.

Let us now turn back to the specific application of our new connection results. For the $3$-coloured case, we have determined the precise threshold for $\tc_3(G) \leq 3$. For the only remaining range, namely if $(\log n/ n)^{1/3} \ll p \ll (\log n/ n)^{1/4},$ we know by our \Cref{thm:three_threshold} that $\tc_3(\Gnp) \ge 4$ and a result from \cite{bucic2019covering} tells us it is finite, in particular they show $\tc_3(\Gnp) \le 21$. Our methods allow us to show it is always at most $5$ and that this is tight for a while (see \cite{thesis} for the details). This only leaves open the question of the threshold $p^*$ for $\tc_3(G) \leq 4.$ Somewhat unexpectedly, this threshold does not match a threshold for any $k$ vertices to have a common neighbour in $\Gnp$. Our methods allow one to show that $n^{-1/3 + \varepsilon} < p^* < n^{-2/7 + o(1)},$ for some constant $\varepsilon > 0$ (see \cite{thesis} for the details). It is probably not hard to further improve either of these exponents, but it seems additional ideas are needed to determine the precise value of $p^*.$ We summarize the behaviour of $\tc_3(G)$ for different ranges of $p$ in the following table.

\begin{center}
    \begin{tabular}{c|c|c|c|c|c}
        $p$ & $\left[0, \left(\frac{\log n}{n}\right)^{1/3}\right)$ & $\left(\left(\frac{\log n}{n}\right)^{1/3}, p^*\right)$ & $\left(p^*,\left(\frac{\log n}{n}\right)^{1/4}\right)$ & $\left(\left(\frac{\log n}{n}\right)^{1/4}, 1 - o(1)\right)$ & $\big(1 - o(1), 1 \big]$ \\
        \hline
        $\tc_3(G)$ & $\infty$ & $5$ & $4$ & $3$ & $2$
    \end{tabular}
\end{center}

\textbf{Remark.} Some of the work in this paper has been done during the master thesis project of the first author \cite{thesis} at ETH Z\"urich. In order to focus on the most interesting aspects of our new ideas we do not include here some of the minor or more technical side results, but we often mention them to give a more complete picture.

\textbf{Acknowledgments}
We would like to thank Benny Sudakov for useful conversations and comments regarding this paper and Shoham Letzter for useful suggestions and a careful reading of an early version of the paper. We are also very grateful to the anonymous referees for their very helpful comments and suggestions. 


\providecommand{\bysame}{\leavevmode\hbox to3em{\hrulefill}\thinspace}
\providecommand{\MR}{\relax\ifhmode\unskip\space\fi MR }
\providecommand{\MRhref}[2]{%
  \href{http://www.ams.org/mathscinet-getitem?mr=#1}{#2}
}
\providecommand{\href}[2]{#2}

\end{document}